\newcounter{dog}
\newtheorem{theorem}{Theorem}
\newtheorem{lemma}[theorem]{Lemma}
\newtheorem{question}[dog]{Question}
\title{Avoiding Monotone Arithmetic Progressions in Permutations of Integers}
\author{Sarosh Adenwalla\thanks{Department of Computer Science, University of Liverpool, UK, \texttt{Sarosh.Adenwalla@liverpool.ac.uk}, \orcidlink{0009-0009-8582-1281}}}
\begin{document}
\maketitle

\begin{abstract}
A permutation of the integers avoiding monotone arithmetic progressions of length $6$ was constructed in (Geneson, 2018). We improve on this by constructing a permutation of the integers avoiding monotone arithmetic progressions of length $5$. We also construct permutations of the integers and the positive integers that improve on previous upper and lower density results. In (Davis et al. 1977) they constructed a doubly infinite permutation of the positive integers that avoids monotone arithmetic progressions of length $4$. We construct a doubly infinite permutation of the integers avoiding monotone arithmetic progressions of length $5$. A permutation of the positive integers that avoided monotone arithmetic progressions of length $4$ with odd common difference was constructed in (LeSaulnier and Vijay, 2011). We generalise this result and show that for each $k\geq 1$, there exists a permutation of the positive integers that avoids monotone arithmetic progressions of length $4$ with common difference not divisible by $2^k$. In addition, we specify the structure of permutations of $[1,n]$ that avoid length $3$ monotone arithmetic progressions mod $n$ as defined in (Davis et al. 1977) and provide an explicit construction for a multiplicative result on permutations that avoid length $k$ monotone arithmetic progressions mod $n$. 
\end{abstract}

\section{Introduction}

A permutation, $a_1,a_2,a_3\ldots$, avoids monotone arithmetic progressions of length $k$ if there does not exist an increasing or decreasing subsequence of the permutation that forms a $k$-term arithmetic progression. We will refer to monotone arithmetic progressions of length $k$ as $k$-APs. Clearly, if a permutation avoids $k$-APs then it avoids $h$-APs for any $h>k$. In \cite{Davis77}, Davis et al. proved that the positive integers cannot be permuted to avoid $3$-APs and can be permuted to avoid $5$-APs and in \cite{Geneson18}, Geneson proved that the integers can be permuted to avoid $6$-APs. We prove that the integers can be permuted to avoid $5$-APs. These results leave open the questions of whether there exist permutations avoiding $4$-APs in the integers and the positive integers. Note that if there exists a permutation, $P$, of the integers avoiding $k$-APs, then the subsequence of positive integers in $P$ forms a permutation of the positive integers avoiding $k$-APs. Therefore, if there does not exist a permutation of the positive integers avoiding $k$-APs, there does not exist a permutation of the integers avoiding $k$-APs.  

We can work towards answering these open questions by considering the largest subset of the integers (or positive integers) that can be permuted to avoid $3$-APs or $4$-APs. Let $\alpha_{\mathbb{Z^+}}(k)$ be the supremum of $\limsup_{n\rightarrow \infty} \frac{|S\cap [1,n]|}{n}$ and $\beta_{\mathbb{Z^+}}(k)$ be the supremum of $\liminf_{n\rightarrow \infty} \frac{|S\cap [1,n]|}{n}$, over all sets of positive integers $S$ that can be permuted to avoid $k$-APs. These are the upper and lower density functions for the positive integers, as defined in \cite{LeSaulnier11}. We can define similar functions for the integers, whereby $\alpha_{\mathbb{Z}}(k)$ is the supremum of $\limsup_{n\rightarrow \infty} \frac{|R\cap [-n,n]|}{2n}$ and $\beta_{\mathbb{Z}}(k)$ is the supremum of $\liminf_{n\rightarrow \infty} \frac{|R\cap [-n,n]|}{2n}$ over all sets of integers $R$ that can be permuted to avoid $k$-APs. These are the upper and lower density functions for the integers which were first defined in \cite{Geneson18}. In \cite{LeSaulnier11} it was shown that $\alpha_{\mathbb{Z^+}}(4)=1$, $\alpha_{\mathbb{Z^+}}(3)\geq \frac{1}{2}$ and $\beta_{\mathbb{Z^+}}(3)\geq \frac{1}{4}$. These are currently the best known bounds and the authors of \cite{LeSaulnier11} conjectured that the latter two bounds are equalities. 

In \cite{Geneson18}, it was shown that $\alpha_{\mathbb{Z}}(k)=\beta_{\mathbb{Z}}(k)=1$ for all $k\geq 6$. By proving that the integers can be permuted to avoid $5$-APs, we show that $\alpha_{\mathbb{Z}}(5)=\beta_{\mathbb{Z}}(5)=1$. It was also proven in \cite{Geneson18} that $\alpha_{\mathbb{Z}}(3)\geq\frac{1}{2}, \beta_{\mathbb{Z}}(3)\geq\frac{1}{6}$ and $\beta_{\mathbb{Z^+}}(4)\geq\frac{1}{2}$. We improve on these latter two results to show that $\beta_{\mathbb{Z}}(3)\geq\frac{3}{10}$ and $\beta_{\mathbb{Z^+}}(4)=1$. We also show that $\alpha_{\mathbb{Z}}(4)=1$ and $\beta_{\mathbb{Z}}(4)\geq\frac{2}{3}$. All of these results have been proved by explicitly constructing permutations that avoid $k$-APs and attain those densities. There has not been any result found or technique developed towards upper bounding these values strictly below $1$.  

Another way of moving towards answering whether $4$-AP free permutations of the positive integers exist is to consider permutations with restrictions on the common difference of any $4$-APs that appear. In \cite{LeSaulnier11}, it was shown that the positive integers can be permuted to avoid $4$-APs with odd common difference (all $4$-APs have common difference divisible by $2$). We generalise this to show that, for any $k\geq 1$, the positive integers can be permuted to avoid $4$-APs with common difference not divisible by $2^k$ (equivalently, all $4$-APs in the permutation have common difference divisible by $2^k$). 

In \cite{Davis77}, it was shown that the positive integers can be arranged into a doubly infinite permutation, $\ldots a_{-2},a_{-1},a_0,a_1,a_2\ldots,$ that avoids $4$-APs but cannot be arranged in a doubly infinite permutation that avoids $3$-APs. Note that this implies that the integers cannot be arranged in a doubly infinite permutation that avoids $3$-APs. We will show that there exists a doubly infinite permutation of the integers that avoids $5$-APs. Whether there exists a doubly infinite permutation of the integers avoiding $4$-APs is still open. 

Let $a_1,a_2,\ldots,a_n$ be a permutation of $[1,n]$. Then a subsequence of this permutation, $a_{i_1},a_{i_2},\ldots,a_{i_k}$, is a $k$-AP mod $n$ if, for $d$ not equal to $0$  and an integer $a$, $$a_{i_t}\equiv a+td \pmod n,$$ for $1\leq t\leq k$. For example $4,2,5,1,3$ contains $3$-APs mod $5$ such as $4,2,5$ and $4,5,1$. Let $n$ be called \emph{$k$-permissible} if $[1,n]$ can be permuted to avoid $k$-APs mod $n$. In \cite{Nathanson77}, Nathanson shows that $n$ is $3$-permissible if and only if $n$ is a power of 2. Different proofs of this are given in \cite{Goh21,Karolyi17}. It is proven in both \cite{Davis77, Karolyi17} that $n$ is $5$-permissible for all $n$, which means all $n$ are $k$-permissible for all $k\geq 5$. It is not known which $n$ are $4$-permissible. We show that $n$ is $k$-permissible if and only if all the prime factors of $k$ are $k$-permissible, improving on a result in \cite{Nathanson77}. This allows us to prove that all $n$ that have no prime factors greater than $23$ are $4$-permissible. We also show that any permutation of [1,$2^k$] avoiding $3$-APs mod $2^k$ must obey a certain structure. From this we determine that there are $2^{2^k-1}$ such permutations, which was originally proven in \cite{Goh21} by a different method.  

The way we improve the results in these areas is by using modular arithmetic in creating these permutations. This is used in \cite{Davis77} where terms are arranged based on their parity in a doubly infinite permutation and in \cite{LeSaulnier11}. Any $k$-AP with terms that are distinct mod $n$ must form a $k$-AP mod $n$ with distinct terms. So, by grouping integers in a given interval together in the permutation according to their residues mod $2^k$ for some $k$ (having been rearranged to avoid $3$-APs), and ordering those residues in a way that avoids $3$-APs mod $2^k$, we can avoid $3$-APs appearing in certain ways in the permutation. This has been used in Theorems \ref{1}, \ref{4}, \ref{6} and \ref{7}. 

It is intuitive to think that constructing these permutations with large lower density is more difficult for the integers than for the positive integers. However, we can use the structure of the integers to achieve a permutation that avoids $3$-APs with a subset of the integers of greater lower density than the best known results for the positive integers. Let $A$ be a subset of the positive integers that can be permuted to avoid $3$-APs, of the form $[a_1,b_1],[c_1,d_1],[e_1,f_1],\ldots$ for $0<a_1<b_1<c_1<\ldots$ positive integers. Due to how these sets are usually permuted, in order for $A$ to avoid $3$-APs, the gaps between intervals are large compared to the size of the intervals. Now let $B$ be a subset of the integers that can be permuted to avoid $3$-APs such that for all $a,b\in B$, $a\neq -b$. We can choose the subset we use by initially picking an interval of positive integers, $[a_2,b_2]$, to be in our subset and then adding $[-d_2,-c_2]$ followed by $[e_2,f_2]$ and $[-h_2,-g_2]\ldots$ where $$a_2<b_2<c_2<d_2<e_2<f_2<g_2<h_2<\ldots,$$ are carefully chosen positive integers. In this way, we alternate between positive intervals and negative intervals. By doing this, the gap between $b_2$ and $e_2$ or between $-d_2$ and $-g_2$ can be large enough to avoid $3$-APs, and the gap between $a_2$ and $-c_2$ or between $e_2$ and $-h_2$ can be large enough to do so as well, while the gap between $b_2$ and $c_2$ or $f_2$ and $g_2$ can be smaller than the gaps between intervals in $A$. Let $B^+$ be the set containing the absolute value of all integers in $B$. Then $$\liminf_{n\rightarrow \infty} \frac{|B\cap [-n,n]|}{2n}=\liminf_{n\rightarrow \infty} \frac{|B^+\cap [1,n]|}{2n}=\frac{1}{2}\liminf_{n\rightarrow \infty} \frac{|B^+\cap [1,n]|}{n}.$$ Then $B^+$ is composed of $[a_1,b_2],[c_2,d_2],[e_2.f_2],[g_2,h_2],\ldots$ and so on. Due to the smaller gaps, the intervals in $B^+$ do not need to be spread out as much as in $A$. Thus the lower density of $B^+$ is not reduced as much and if constructed carefully, this can lead to such an improvement that despite the factor of $\frac{1}{2}$, $B$ has a larger lower density compared to $A$. This was used in Theorem \ref{8}.

\section{5-AP Free Permutations}

We will first construct a permutation of the integers, $P$, that avoids $5$-APs. We use that any finite subset of the integers can be permuted to avoid $3$-APs, as proven in \cite{Davis77}. If S is a permutation of a set of integers, $s_1,s_2,s_3,\ldots,s_n$, and $a, b$ are integers, then let $aS+b$ be $a\times s_1+b,a\times s_2+b,a\times s_3+b,\ldots,a\times s_n+b$, so the relative order of the terms is maintained. Note that $S$ avoids $k$-APs if and only if $aS+b$ avoids $k$-APs for non-zero $a$ as $s_{i_1},s_{i_2},\ldots,s_{i_k}$ is a $k$-AP if and only if $a\times s_{i_1}+b,a\times s_{i_2}+b,\ldots,a\times s_{i_k}+b$ is a $k$-AP. Also, if $T$ is another permutation of some set of integers, $t_1,t_2,t_3,\ldots,t_r$, then let $S,T$ denote the concatenation of the two permutations, $s_1,s_2,s_3,\ldots,s_n,t_1,t_2,t_3,\ldots,t_r$. Throughout the following constructions, our permutations are made up of intervals that have been permuted to avoid $3$-APs. We call such intervals \emph{blocks}.

\begin{theorem}\label{1} There exists a permutation of the integers, $P$, that avoids $5$-APs.\end{theorem}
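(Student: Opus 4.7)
The plan is to construct $P$ as the concatenation of finite \emph{blocks}, where each block is an interval of consecutive integers permuted to avoid $3$-APs (possible by \cite{Davis77}), and where the blocks partition $\mathbb{Z}$. Since each block is a single interval, I would alternate a positive block with a negative block: for a rapidly growing sequence $0 = m_0 < m_1 < m_2 < \cdots$, set $B_j^+ = [m_{j-1}+1,\,m_j]$ and $B_j^- = [-m_j,\,-m_{j-1}-1]$, and write $P = 0,\; B_1^+,\; B_1^-,\; B_2^+,\; B_2^-,\; \ldots$ with each block permuted internally. The sequence $m_j$ will grow geometrically fast enough to control $5$-APs that span blocks, and all $m_j$ will be chosen to be multiples of $2^k$ for a fixed $k$ to be tuned at the end.

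Inside every block I would apply the modular-grouping trick from the introduction: group the block's integers by residue modulo $2^k$, order the $2^k$ residue classes using a $3$-AP-free permutation of $\mathbb{Z}/2^k\mathbb{Z}$ (which exists by \cite{Nathanson77}), and permute within each residue class to avoid $3$-APs using \cite{Davis77}. This setup is designed so that a monotone $5$-AP of common difference $d$ lying entirely inside one block is impossible: when $2^k \mid d$, all five terms share a single residue class, which is $3$-AP-free, so no $3$-AP, let alone a $5$-AP, can occur; when $2^k \nmid d$, the five residues mod $2^k$ form a non-constant AP mod $2^k$, and the fixed $3$-AP-free ordering of residue classes inside the block forbids those five terms from appearing in monotone order.

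The step I expect to be the main obstacle is excluding monotone $5$-APs whose five terms are split across two or more blocks. A monotone increasing $5$-AP has its five terms appearing at increasing positions in $P$, so they occupy blocks of non-decreasing index; because positive and negative blocks alternate while the $5$-AP's values change uniformly by $d$, only a handful of ``block patterns'' are consistent with monotonicity (for instance, some terms in an early $B_j^+$ followed by the rest in later positive blocks, with the negative blocks in between empty of AP terms). For each such pattern, the geometric growth of the $m_j$ forces $d$ to be large compared to the smaller block (so consecutive AP terms can escape it) while simultaneously forcing $d$ to be small enough that $4d$ fits into the span from $B_{j_1}^+$ to $B_{j_5}^+$, yielding contradictory inequalities once $m_{j+1}/m_j$ is large enough. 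Because block boundaries are multiples of $2^k$, the mod-$2^k$ residue analysis above propagates across block boundaries and handles any remaining patterns where two or more AP terms co-occur in a block. Putting the intra-block and cross-block arguments together shows no monotone $5$-AP appears in $P$.
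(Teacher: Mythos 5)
Your within-block analysis is sound, and your growth-rate argument does handle spans across non-consecutive or sign-changing blocks. But there is a genuine gap at exactly the configuration you wave at: a $5$-AP with $a_2,a_3$ in one block and $a_4,a_5$ in the next block of the same sign. Concretely, if $a_2,a_3\in B_j^+=[m_{j-1}+1,m_j]$ then $0<d<m_j-m_{j-1}$, so $a_4,a_5<3m_j$; monotonicity and rapid growth then \emph{force} $a_4,a_5\in B_{j+1}^+$ rather than excluding them (while $a_1$ sits harmlessly in an earlier block). No choice of growth rate can produce ``contradictory inequalities'' here, because $d$ is small relative to the gap between the two blocks only in one direction: the pattern two-terms-then-two-terms across adjacent blocks is perfectly compatible with any geometric growth. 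So the whole burden falls on your claim that ``the mod-$2^k$ residue analysis propagates across block boundaries,'' and that claim fails twice. First, if $2^k\mid d$ the residues carry no information at all, and each block only contains two of the four terms, so the $3$-AP-freeness of the residue classes gives no contradiction either. Second, even when $2^k\nmid d$, using the \emph{same} $3$-AP-free ordering of residue classes in every block does not forbid the split: for example with the ordering $0,4,2,6,1,5,3,7$ mod $8$, the $4$-AP of residues $0,2,4,6$ can appear with $0,2$ (in order) in block $j$ and $4,6$ (in order) in block $j+1$, since $3$-AP-freeness of one ordering says nothing about $4$-APs split across two copies of it.

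This is precisely where the paper's proof spends most of its effort. It uses blocks $X_i$ on $[-8^i,-8^{i-1}-1]\cup[8^{i-1},8^i-1]$ with \emph{two different} residue-class orderings, $0,4,2,6,1,5,3,7$ in odd-indexed blocks and its reversal $7,3,5,1,6,2,4,0$ in even-indexed ones, chosen so that (i) the concatenation of consecutive orderings contains no $4$-AP mod $8$ (killing $4\nmid d$), (ii) reversal makes ``$x$ before $y$ in $X_i$'' incompatible with ``$x$ before $y$ in $X_{i+1}$'' (killing $d\equiv 4\pmod 8$), and (iii) the blocks are built recursively as $8X_{i-1}+a$, so the case $8\mid d$ reduces, after subtracting the common residue and dividing by $8$, to the same two-plus-two configuration one scale down, and dies by induction with a finite base-case check. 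Your proposal contains none of these three mechanisms (coordinated/reversed orderings between consecutive blocks, a concatenation condition stronger than $3$-AP-freeness, and self-similarity enabling descent for $2^k\mid d$), so as written it does not yield the theorem; adding them essentially reconstructs the paper's argument.
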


\begin{proof} Let $X_i$ be a permutation of the integers in $[-8^i,-8^{i-1}-1]\cup[8^{i-1},8^i-1]$. We will construct $P$ using these blocks. Let \begin{align*}X_1&=-8,-4,4,-6,2,-2,6,-7,1,-3,5,-5,3,7,\\
X_{2n}&=8X_{2n-1}+7,8X_{2n-1}+3,8X_{2n-1}+5,8X_{2n-1}+1,8X_{2n-1}+6,8X_{2n-1}+2,8X_{2n-1}+4,8X_{2n-1}, \\
X_{2n+1}&=8X_{2n},8X_{2n}+4,8X_{2n}+2,8X_{2n}+6,8X_{2n}+1,8X_{2n}+5,8X_{2n}+3,8X_{2n}+7.
\end{align*}

Note that the integers in each block are grouped together by their residue classes mod $8$. In all odd indexed blocks, including $X_1$, the order of the residue classes is $0,4,2,6,1,5,3,7$ and in all even indexed blocks the order is $7,3,5,1,6,2,4,0$. 

Then let $$P=0,X_1,-1,X_2,X_3,X_4,X_5,\ldots$$. 

We will assume that $P$ contains a $5$-AP and derive a contradiction. Denote the terms of this progression as $a_1,a_2,a_3,a_4,a_5$, with common difference $d$. 

First we prove that each $X_i$ avoids $3$-APs by induction. Clearly $X_1$ avoids $3$-APs by inspection. Assume $X_k$ avoids $3$-APs for some $k$. Note that $X_{k+1}$ is made up of sub-blocks of the form $8X_k+a$ for $a\in\{0,1,2,3,4,5,6,7\}$. Then $8X_k+a$ avoids $3$-APs for any integer $a$. So any $3$-AP in $X_{k+1}$ cannot have all terms in one sub-block $8X_k+a$ for some $a\in\{0,1,2,3,4,5,6,7\}$. Assume there exists a $3$-AP in $X_{k+1}$, with terms $b_1,b_2,b_3$, which we shall prove cannot exist. As $b_1, b_2, b_3$ must belong to more than one sub-block, the common difference of the terms is not divisible by $8$. So $b_1$ and $b_2$ belong to different sub-blocks and $b_2$ and $b_3$ belong to different sub-blocks. Therefore, due to how $X_{k+1}$ is constructed, $b_1$ and $b_3$ belong to different sub-blocks as well. So $b_1,b_2,b_3,$ all have different residues mod $8$ and so $b_1 \pmod{8}$, $b_2 \pmod{8}$, $b_3 \pmod{8}$ form a $3$-AP mod $8$ with distinct terms. However, the order of the sub-blocks (and, therefore, the order of the residue classes mod $8$) in $X_{k+1}$ means that $X_{k+1}$ avoids $3$-APs mod $8$ with distinct terms. This can be seen by inspecting the permutations $$0,4,2,6,1,5,3,7 \qquad \text{ and }\qquad 7,3,5,1,6,2,4,0,$$ the orders of the residue classes mod $8$ in the blocks. Therefore, there is no such $b_1,b_2,b_3$ in $X_{k+1}$. So all $X_i$ avoid $3$-APs by induction. We split this into three cases based on whether $a_2$ and $a_3$ are in the same block, whether they are in different blocks or whether one of $a_2$ or $a_3$ is not in a block.  

\medskip
\noindent
$\textbf{Case 1}$: First we will consider the case where $a_2$ and $a_3$ are in different blocks. Denote the block $a_2$ belongs to as $X_i$. Then as $a_1$ appears before $a_2$ in $P$, we have that $|a_1|,|a_2|\leq8^i$ and $|d|<2\times8^i$. By the triangle inequality, $|a_3|=|a_2+d|<3\times8^i$. But as $a_3$ appears after $a_2$ in $P$ and is in a different block to $a_2$, it follows that $a_3$ is in $X_{i+1}$. Similarly $|a_4|<5\times8^i$ and $|a_5|<7\times8^i$ with both appearing after $a_3$ in $P$. So $a_4$ and $a_5$ are also in $X_{i+1}$. But then $a_3,a_4,a_5$ form a $3$-AP in $X_{i+1}$ which is a contradiction. \hfill $\diamond$

\medskip
\noindent
$\textbf{Case 2}$: Now we consider the case when $a_2$ and $a_3$ are in the same block, denoted $X_i$. As $a_2$ and $a_3$ are in $X_i$, this implies that $|a_2|<8^i$ and $|a_3|<8^i$ so $|d|<2\times8^i$. By the triangle inequality $|a_4|<3\times8^i$, and as $X_i$ avoids $3$-APs it follows that $a_4$ cannot be in $X_i$. Therefore, $a_4$ is in $X_{i+1}$. As $|a_5|<5\times8^i$ and $a_5$ appears in $P$ after $a_4$, we see that $a_5$ is in $X_{i+1}$. So $a_2,a_3,a_4,a_5$ are a $4$-AP with $a_2,a_3$ in $X_i$ and $a_4,a_5$ in $X_{i+1}$. We show that this cannot occur by proving that there can be no such $4$-AP over consecutive blocks $X_i$ and $X_{i+1}$. In fact this shows that there can be no $4$-AP over consecutive blocks at all as the only other way there could be is if there were either $3$ consecutive terms in $X_i$ or $3$ consecutive terms in $X_{i+1}$ which cannot happen. Let $b_1,b_2,b_3,b_4$ be a $4$-AP with common difference $d$ and with $b_1,b_2$ in $X_i$ and $b_3,b_4$ in $X_{i+1}$. We break this into sub-cases based on $d$. The first one is when $d$ is not divisible by $4$, the second is when $d$ is divisible by $4$ but not $8$ and the third is when $d$ is divisible by $8$.   

\medskip

$\textbf{Case 2a}$: If $d$ is not divisible by $4$, then $b_1,b_2,b_3,b_4$ are distinct mod $8$ and as they form a $4$-AP, we see that $b_1 \pmod{8}$, $b_2 \pmod{8}$, $b_3 \pmod{8}$, $b_4 \pmod{8}$ are a $4$-AP mod $8$ with distinct terms. However, the order of the residue classes mod 8 of $X_i,X_{i+1}$ is either $$7,3,5,1,6,2,4,0,4,2,6,1,4,3,7 \qquad\text{ or }\qquad 0,4,2,6,1,5,3,7,3,5,1,6,2,4,0,$$ depending on whether $i$ is even or odd. These sequences do not contain $4$-APs mod $8$ so there is no such $4$-AP with $d$ not divisible by $4$.\hfill$\diamond$

\medskip

$\textbf{Case 2b}$: If $d\equiv 4 \pmod{8}$, then $b_1\equiv b_3 \pmod{8}$ and $b_2\equiv b_4 \pmod{8}$ while $b_1$ is not equivalent to $b_2 \pmod{8}$. So there exists $x,y\in\{0,1,2,3,4,5,6,7\}$ such that $b_1\equiv b_3\equiv x \pmod{8}$ and $b_2\equiv b_4\equiv y \pmod{8}$. As $b_1$ appears in $X_i$ before $b_2$ then $x$ appears before $y$ in the order of the residue classes mod $8$ of $X_i$. As $b_3$ appears in $X_{i+1}$ before $b_4$ then $x$ appears before $y$ in the order of the residue classes mod $8$ of $X_{i+1}$. But by the definition of $X_i$ and $X_{i+1}$, if $x$ appears before $y$ in the order of the residue classes mod $8$ of $X_i$ then $y$ appears before $x$ in the order of the residue classes mod $8$ of $X_{i+1}$. Therefore, no such $4$-AP with $d\equiv 4 \pmod{8}$ exists. \hfill$\diamond$  

\medskip

$\textbf{Case 2c}$: Finally consider the case where $d\equiv 0 \pmod{8}$. Proceed by induction on $i$. First note that there is no $4$-AP with $b_1,b_2$ in $X_1$ and $b_3,b_4$ in $X_2$ with $d\equiv 0 \pmod{8}$. This can be seen as the only terms in $X_1$ with a difference divisible by $8$ are $$\{-4,4\}, \{-6,2\}, \{-2,6\}, \{-7,1\}, \{-3,5\}, \{-5,3\}.$$ So the only possible such progressions are $$\{-4,4,12,20\}, \{-6,2,10,18\}, \{-2,6,14,22\}, \{-7,1,9,17\}, \{-3,5,13,21\}, \{-5,3,11,19\}.$$ However, $2$ appears before $1$ in $X_1$, so $20$ appears before $12$ in $X_2$. Similarly, due to the order of terms in $X_1$ and the recursive definition of $X_2$, the third term in each of the above progressions appears after the fourth term in $X_2$. So there is no $4$-AP with the first two terms in $X_1$ and the last two terms in $X_2$ with common difference $d\equiv 0 \pmod{8}$.

Assume there is no $4$-AP with the first two terms in $X_{k-1}$ and the last two terms in $X_k$ with common difference $d\equiv 0 \pmod{8}$. Now consider a $4$-AP, $b_1,b_2,b_3,b_4$, with $b_1,b_2,$ in $X_k$ and $b_3,b_4$ in $X_{k+1}$ for $d\equiv 0 \pmod{8}$ and $k>1$. Then there exists $w\in\{0,1,2,3,4,5,6,7\}$ such that $$b_1\equiv b_2\equiv b_3\equiv b_4\equiv w \pmod{8},$$ so $b_1,b_2$ are in the sub-block $8X_{k-1}+w$ of $X_k$ and $b_3,b_4$ are in the sub-block $8X_k+w$ of $X_{k+1}$. Then let $c_j=b_j-w$ for $j\in\{1,2,3,4\}$. So $c_j$ is the multiple of $8$ that is less than or equal to $b_{j}$. As the smallest integer in $[8^{k-1},8^k-1]$ is a multiple of $8$ when $k>1$, and the smallest integer of $[-8^k,-8^{k-1}-1]$ is a multiple of 8, we see that $c_j$ is in the same block as $b_{j}$. So $c_1,c_2$ are in $X_k$ and $c_3,c_4$ are in $X_{k+1}$. As $c_1,c_2,c_3,c_4$ are multiples of $8$, it follows that $c_1,c_2$ are in the sub-block $8X_{k-1}$ and $c_3,c_4$ are in the sub-block $8X_k$. It is clear that $b_1$ appears before $b_2$ in $8X_{k-1}+w$, so $b_1-w=c_1$ appears before $b_2-w=c_2$ in $8X_{k-1}$. In the same way, as $b_3$ appears before $b_4$ in $8X_k+w$, we have that $b_3-w=c_3$ appears before $b_4-w=c_4$ in $8X_k$. Note that $8X_{k-1}$ is in $X_k$ and $8X_k$ is in $X_{k+1}$ so $8X_{k-1}$ appears before $8X_k$ in $P$. So as $b_1,b_2,b_3,b_4$ is a $4$-AP in $P$ with common difference $d$, the terms $b_1-w,b_2-w,b_3-w,b_4-w$ form a $4$-AP in $P$ with the same common difference $d$. Therefore, $c_1,c_2,c_3,c_4$ is a $4$-AP in $P$ with common difference $d$ with all terms $c_j$ and $d$ being divisible by $8$. 

As $c_1$ appears in the sub-block $8X_{k-1}$ before $c_2$ and $c_3$ appears in the sub-block $8X_k$ before $c_4$, it follows that $\frac{c_1}{8}$ appears before $\frac{c_2}{8}$ in $X_{k-1}$ and $\frac{c_3}{8}$ appears before $\frac{c_4}{8}$ in $X_k$. So $\frac{c_1}{8},\frac{c_2}{8},\frac{c_3}{8},\frac{c_4}{8}$ is a $4$-AP in $P$ with $\frac{c_1}{8},\frac{c_2}{8}$ in $X_{k-1}$ and $\frac{c_3}{8},\frac{c_4}{8}$ in $X_k$. Denote the common difference of this progression by $d'$ and note that $d'=\frac{d}{8}$. We have shown that if the common difference is not divisible by $8$, the progression cannot exist. So $d'\equiv 0 \pmod{8}$. However, this contradicts our inductive assumption. It follows that there is no $4$-AP with $d\equiv 0 \pmod{8}$ and with $b_1,b_2$ in $X_k$ and $b_3,b_4$ in $X_{k+1}$. 

This completes the induction and so, no such $4$-AP with $d$ divisible by $8$ appears in $P$. \hfill$\diamond$

\medskip
\noindent
Therefore, from the above sub-cases, no $4$-AP with terms $b_1,b_2$ in $X_i$ and $b_3,b_4$ in $X_{i+1}$ exists for any $i\geq 1$ in $P$. As mentioned, this means that there is no $4$-AP with terms in consecutive blocks. It follows that $a_2$ and $a_3$ cannot belong to the same block. This concludes the proof of Case $2$.\hfill$\diamond$

\medskip
\noindent
$\textbf{Case 3}$: The final case is when one of $a_2$ or $a_3$ is not in a block and is therefore equal to $0$ or $-1$. Neither $a_2$ nor $a_3$ can be $0$ as $0$ is the first term of $P$ and $a_1$ must appear in $P$ before $a_2$ and $a_3$. If $a_2$ is $-1$ then $a_1$ appears before $a_2$ in $P$ so $-8\leq a_1\leq 7$ and therefore $|d|\leq 8$. We then have that $|a_3|\leq 9$, also $|a_4|\leq 17$ and $|a_5|\leq 25$. As $a_3,a_4,a_5$ appear in $P$ after $-1$, they are all in $X_2$. But they would form a $3$-AP which is a contradiction as $X_2$ avoids these. So $a_2$ cannot be $-1$. If $a_3$ is $-1$ then $a_1,a_2,a_3$ is a $3$-AP contained in $0,X_1,-1$. As $a_3$ is odd, $a_1$ must be odd. By considering how $X_1$ is ordered, $a_2$ must be odd. So $d$ must be even which implies that the difference between $a_1$ and $a_3=-1$ is divisible by $4$. The only options for $a_1$ are then $-5, 3$ and $7$. Therefore, the only possible $3$-APs $a_1,a_2,a_3$ could be the following, $$\{-5,-3,-1\},\{3,1,-1\},\{7,3,-1\}.$$ However, $-3$ appears in $P$ before $-5$, similarly $1$ appears before $3$ and $3$ appears before $7$. This means $a_3$ cannot be $-1$. \hfill$\diamond$

\medskip
\noindent
From the above cases we can conclude that $P$ avoids $5$-APs.\end{proof}

This implies that $\alpha_{\mathbb{Z}}(5)=\beta_{\mathbb{Z}}(5)=1$. We now prove the same for doubly infinite permutations. Theorem \ref{2} follows easily from Theorem \ref{1} if we take $P$ to be the permutation defined above and consider a doubly infinite permutation $S$ where $2P+1$ is the permutation in $S$ that continues to the left and $2P$ is the permutation in $S$ that continues to the right. Below we provide an alternative permutation. 
\begin{theorem}\label{2} There exists a doubly infinite permutation of the integers, T, that avoids $5$-APs. \end{theorem}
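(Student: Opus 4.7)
The plan is to lift $P$ from Theorem \ref{1} to a doubly infinite setting by splitting $\mathbb{Z}$ according to parity. Define $T$ so that its rightward half $T_1, T_2, T_3, \ldots$ lists the terms of $2P$ in the order they appear in $P$, and its leftward half $T_0, T_{-1}, T_{-2}, \ldots$ lists the terms of $2P+1$ in the order they appear in $P$. Since $P$ is a permutation of $\mathbb{Z}$, the sequence $2P$ is a permutation of the even integers and $2P+1$ is a permutation of the odd integers, so $T$ is a doubly infinite permutation of $\mathbb{Z}$. By the scaling and translation observation stated in the paragraph preceding Theorem \ref{1}, both $2P$ and $2P+1$ avoid $5$-APs.

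I would then suppose for contradiction that $T$ contains a monotone $5$-AP with common difference $d \neq 0$ and split on the parity of $d$. If $d$ is even, all five terms share a common parity, so they lie entirely in one half of $T$; this yields a monotone $5$-AP in $2P$ or $2P+1$, contradicting Theorem \ref{1}. If $d$ is odd, successive terms of the AP alternate in parity, which forces their positions in $T$ to alternate between the leftward half (odd values) and the rightward half (even values). But every index in the leftward half of $T$ is strictly less than every index in the rightward half, so within a single subsequence of $T$ one can cross from left to right at most once and never return. Realising a $5$-AP with odd $d$ as a subsequence would require several such crossings back and forth, which is impossible.

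A small bookkeeping point is that the leftward half lists $2P+1$ in the reverse of its natural reading within $T$, but reversing a sequence swaps increasing monotone APs with decreasing ones and therefore preserves the property of avoiding $k$-APs, so this causes no issue. I expect the main obstacle to be nothing more than keeping the index bookkeeping clean in the odd-$d$ case; the substantive content is entirely carried by Theorem \ref{1} together with the one-line parity observation. The author notes that an alternative explicit construction is given instead, but the parity splicing above already furnishes a direct proof.
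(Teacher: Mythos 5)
Your argument is correct, and the paper itself endorses it: it is precisely the reduction sketched in the sentence immediately preceding Theorem \ref{2} (splice $2P+1$ to the left and $2P$ to the right of the permutation $P$ from Theorem \ref{1}). The key points all check out: $2P$ and $2P+1$ avoid $5$-APs by the scaling/translation remark before Theorem \ref{1}; an even common difference keeps all five terms in one parity class and hence in one half, giving a $5$-AP in $2P$ or $2P+1$; an odd common difference forces the values to alternate parity, hence the positions to alternate between the left half and the right half, and since every left-half index precedes every right-half index, a subsequence can contain at most one such transition (left to right), so not even three terms are possible; and your observation that reading the left half in increasing index order reverses $2P+1$, which merely swaps increasing and decreasing APs, closes the only bookkeeping gap. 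Where you differ from the paper is that its written proof of Theorem \ref{2} is the advertised \emph{alternative} construction: blocks $Z_i$, each an arbitrary $3$-AP-free permutation of $[-8^i,-8^{i-1}-1]\cup[8^{i-1},8^i-1]$, alternated left and right around $0, Z_1, -1$, with a direct case analysis on where $a_2$ and $a_3$ fall. That route is longer but self-contained and more flexible: it does not rely on the specific recursive mod-$8$ structure needed in Theorem \ref{1}, only on the existence of $3$-AP-free permutations of finite sets, and so it produces a whole family of $5$-AP-free doubly infinite permutations. Your route buys brevity, at the cost of funnelling everything through the particular permutation $P$ of Theorem \ref{1}.
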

\begin{proof}
Consider the doubly infinite permutation $$T=\dots,Z_{2i},Z_{2i-2},\ldots,Z_6,Z_4,Z_2,0,Z_1,-1,Z_3,Z_5,\ldots,Z_{2i-1},Z_{2i+1},\ldots,$$ where $Z_i$ is a permutation of the integers in $[-8^i,-8^{i-1}-1]\cup[8^{i-1},8^i-1]$ that avoids $3$-APs. Assume $T$ contains a $5$-AP $a_1,a_2,a_3,a_4,a_5$ with common difference $d$. Denote the block $a_2$ belongs to as $Z_j$. 

Note that if $a_3$ is in block $Z_i$, then either $i>j$, $i=j$ and $i$ is odd, $i<j$ or $i=j$ and $i$ is even. If $i>j$ then $i$ must be odd. This is because $a_3$ appears after $a_2$ in $T$ and, therefore, $Z_i$ appears to the right of $Z_j$ in $T$. So if $i>j$ then, from the structure of $T$, it follows that $i$ must be odd. The first two scenarios are covered in Case $1$, the third in Case $2$ and the fourth in Case $3$. Case $4$ is the final possibility that one of $a_2$ or $a_3$ is not in any blocks and is therefore either $0$ or $1$.

\medskip
\noindent
$\textbf{Case 1}$: Let $a_3$ be in $Z_i$ with $i\geq j$ and $i$ odd. Then $|a_2|,|a_3|\leq 8^i$ and $|d|< 2\times 8^i$. Both $a_4,a_5$ cannot be in $Z_i$ as $a_3,a_4,a_5$ would form a $3$-AP. So $a_5$ must be in a block that appears after $Z_i$ and therefore, as $i$ is odd, $|a_5|\geq 8^{i+1}$. But as $|a_5|=|a_3+2d|<5\times8^i$, we reach a contradiction. So $a_3$ is not in $Z_i$ for odd $i$ with $i\geq j$.\hfill $\diamond$

\medskip
\noindent
$\textbf{Case 2}$: Let $a_3$ be in $Z_i$ for $i<j$. As $a_3$ appears in $T$ after $a_2$, we see that $j$ is even. Assume $a_2$ is negative. The cases where $a_2$ is positive are similar. The first two sub-cases cover when $a_3<0$ and the third is when $a_3>0$.

\medskip

$\textbf{Case 2a}$: Let $a_3\in Z_i$ for $i<j-1$ with $a_3<0$. So $a_2\in[-8^j,-8^{j-1}-1]$ and $a_3\in[-8^i,-8^{i-1}-1]$. Therefore, $$8^{j-1}-8^i<d<8^j-8^{i-1},$$\text{  and it follows that } $$8^{j-1}-2\times8^i<a_4<8^j-2\times8^{i-1}.$$ It is clear that $a_4$ cannot be in $Z_j$ as $a_2$ is in $Z_j$ and $a_3\in Z_i$ appears between $a_2$ and $a_4$. So, $8^{j-1}-2\times8^i<a_4<8^{j-1}$ which means $a_4$ is in $Z_{j-1}$. Therefore, $$8^{j-1}-8^i<\frac{a_4-a_2}{2}=d<\frac{8^j+8^{j-1}}{2},$$ \text{ which means } $$8^{j-1}<2\times8^{j-1}-3\times8^i<a_5<\frac{1}{2}8^j+\frac{3}{2}8^{j-1}=\frac{11}{2}8^{j-1}.$$ So $a_5\in Z_j$ which is not possible as $a_2$ is in $Z_j$ and $a_3$ is not. \hfill $\diamond$

\medskip

$\textbf{Case 2b}$: Let $a_3\in Z_{j-1}$ and $a_3<0$ so $a_2\in[-8^j,-8^{j-1}-1]$ and $a_3\in[-8^{j-1},-8^{j-2}-1]$ which means $0<d<8^j-8^{j-2}$. Note that $j-1$ is odd and as $a_3,a_4,a_5$ cannot all be in $Z_{j-1}$, we see that $a_5$ appears after $Z_{j-1}$ in an odd indexed block. So $a_5\geq 8^j$ or $a_5\leq-8^j$. However, as $a_5>a_2$, it follows that $a_5\geq 8^j$. Therefore, $$\frac{8^j+8^{j-1}}{3}<\frac{a_5-a_2}{3}=d<8^j-8^{j-2},$$ \text{ and so }$$2\times8^{j-1}=\frac{-2\times8^{j-1}+8^j}{3}<a_4<8^j-2\times8^{j-2}.$$ This means $a_4$ is in $Z_j$ which is not possible as $a_2\in Z_j$ and $a_3$ is not.\hfill $\diamond$

\medskip

$\textbf{Case 2c}$: Let $a_3\in Z_i$ for $i<j$ and let $a_3>0$. Then $a_2\in[-8^j,-8^{j-1}-1]$, $a_3\in[8^{i-1},8^i-1]$ and $8^{i-1}+8^{j-1}<d$. So $a_1\in Z_k$ for $k\geq j$ and $k$ is even, as $a_1$ appears before $a_2$ in $T$. We have $a_1\in[-8^k,-8^{k-1}-1]$ as $a_1<a_2$. If $k>j$ then this is the same as Cases $2a$ and $2b$ but instead of considering $a_2$ and $a_3$ we consider $a_1$ and $a_2$. If $k=j$ then $d<8^j-8^{j-1}=7\times8^{j-1}$. Therefore $8^{i-1}\leq a_3<6\times 8^{j-1}$. It follows that $$8^{j-1}+2\times8^{i-1}<a_4<6\times8^{j-1}+7\times8^{j-1}=13\times8^{j-1}.$$ We see that $a_4$ cannot be in $Z_j$ because $a_2\in Z_j$ and $a_3$ is not, so $8^j\leq a_4<13\times 8^{j-1}$. Therefore, $$\frac{9}{2}8^{j-1}=\frac{8^j+8^{j-1}}{2}<\frac{a_4-a_2}{2}=d<7\times 8^{j-1},$$ and this means $$8^{j-1}=8^j-7\times8^{j-1}<a_4-d=a_3=a_2+d<6\times 8^{j-1},$$ so $a_3$ is in $Z_j$. This cannot happen as $a_3\in Z_i$ for $i<j$. \hfill $\diamond$

\medskip
\noindent
It follows from the above sub-cases that $a_3$ cannot be in $Z_i$ for $i<j$. This concludes the proof of Case 2. \hfill $\diamond$

\medskip
\noindent
$\textbf{Case 3}$: Let $j$ be even and $a_3\in Z_j$. Then $|d|<2\times8^j$. So $|a_1|\leq |a_2|+|d|<3\times8^j$. It is clear that $a_1$ cannot be in $Z_j$ as $a_1,a_2,a_3$ would form a $3$-AP in $Z_j$ so as $j$ is even and $a_1$ appears before $a_2$, we see that $|a_1|\geq 8^{j+1}$. This implies $8^{j+1}\leq |a_1|<3\times 8^j$ which is a contradiction.\hfill $\diamond$

\medskip
\noindent
$\textbf{Case 4}$: Finally consider the case where one of $a_2$ or $a_3$ is either $0$ or $-1$. If $a_2$ or $a_3$ were $0$ then $a_1,a_2,a_3$ or $a_2,a_3,a_4$ would be a $3$-AP in $T$ with $0$ as the middle term. This would involve $0$ appearing between a term and its additive inverse in $T$. So the only such $3$-APs in $T$ are $\{-8^i,0,8^i\}$ and $\{8^i,0,-8^i\}$ as all other terms belong to the same block as their additive inverse. So the only possible $5$-APs with $a_2$ or $a_3$ being $0$ are $$\{-2\times8^i, -8^i, 0, 8^i, 2\times8^i\}, \{2\times8^i, 8^i, 0, -8^i, -2\times8^i\}, \{8^i, 0, -8^i, -2\times8^i, -3\times8^i\}, \{-8^i, 0, 8^i, 2\times8^i, 3\times8^i\}.$$
However, $8^i$ and $-2\times 8^i$ belong to the same block so $0$ does not appear between them. Therefore the first three $5$-APs do not occur in $T$. Also $8^i, 2\times8^i,3\times8^i$ belong to $Z_{i+1}$ which avoids $3$-APs and therefore the fourth $5$-AP does not occur in $T$. So neither $a_2$ nor $a_3$ can be $0$.

Similarly, if $a_2$ or $a_3$ were $-1$, then $-1$ would be the middle term of a $3$-AP. The only such progressions are $\{-8^i-1,-1,8^i-1\}$ and $\{8^i-1,-1,-8^i-1\}$ as all other terms, $r$, belong to the same block as $-r-2$. So the only possible $5$-APs with $a_2$ or $a_3$ being $-1$ are $$\{2\times8^i-1, 8^i-1, -1, -8^i-1, -2\times8^i-1\}, \{-2\times8^i-1, -8^i-1, -1, 8^i-1, 2\times8^i-1\},$$ $$
\{-8^i-1, -1, 8^i-1, 2\times8^i-1, 3\times8^i-1\}, \{8^i-1, -1, -8^i-1, -2\times8^i-1, -3\times8^i-1\}.$$ 
However, $-8^i-1$ and $2\times8^i-1$ belong to the same block so $-1$ does not appear in $T$ between them. So the first three $5$-APs do not appear in $T$. Also $-8^i-1,-2\times8^i-1,-3\times8^i-1$ belong to $Z_{i+1}$ which avoids $3$-APs and therefore the fourth $5$-AP does not appear in $T$. \hfill $\diamond$

\medskip
\noindent
By the above cases, we can conclude that $T$ avoids $5$-APs. 
\end{proof}

\section{Monotone Arithmetic Progressions with restricted common difference}
We now prove that for any $k\geq 1$ there is a permutation of the positive integers that avoids $4$-APs with common difference not divisible by $2^k$. This generalises the result,

\begin{theorem} [\cite{LeSaulnier11}]
There exists a permutation of the positive integers in which every $4$-AP that occurs as a subsequence has common difference divisible by $2$.
\end{theorem}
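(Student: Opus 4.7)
The plan is to mimic the block-with-sub-block construction from Theorem~\ref{1}, now adapted to the positive integers with residues taken mod $N = 2^k$. The key ingredient is a permutation $\pi = (c_0, c_1, \ldots, c_{N-1})$ of $\{0, 1, \ldots, N-1\}$ that avoids $3$-APs mod $N$; such a $\pi$ exists because $N = 2^k$ is $3$-permissible by Nathanson's theorem mentioned in the introduction. Write $\pi^R$ for the reverse of $\pi$, which also avoids $3$-APs mod $N$.

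For the construction, partition the positive integers into blocks $B_1, B_2, \ldots$ whose sizes grow exponentially (for concreteness, take $B_i$ to be an interval of the form $[N^{i-1}, N^i - 1]$, with the first few adjusted so that each $B_i$ contains the same count of integers in each residue class mod $N$). Within each $B_i$, group its elements by residue mod $N$ into sub-blocks $S_{i,0}, \ldots, S_{i,N-1}$, permute each $S_{i,r}$ to avoid $3$-APs (possible since each $S_{i,r}$ is finite, by the result of Davis et al.\ cited in the excerpt), and concatenate the sub-blocks in the order dictated by $\pi$ when $i$ is odd and by $\pi^R$ when $i$ is even. The desired permutation is $P = B_1, B_2, B_3, \ldots$.

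To verify $P$ avoids $4$-APs with common difference $d$ not divisible by $N$, first observe that each block $B_i$ avoids $3$-APs with common difference not divisible by $N$: any such $3$-AP in $B_i$ would have three distinct residues mod $N$ forming a $3$-AP mod $N$ and appearing as a subsequence of $\pi$ or $\pi^R$, contradicting their $3$-AP-mod-$N$ avoidance. Hence in any hypothetical $4$-AP $a_1, a_2, a_3, a_4$ in $P$ with $N \nmid d$, no three consecutive terms lie in a single block. I would then split by $v := v_2(d) \in \{0, 1, \ldots, k-1\}$. If $v \leq k-2$, the residues $a_1, a_2, a_3, a_4$ modulo $N$ are four distinct terms forming a $4$-AP mod $N$, so any three of them form a $3$-AP mod $N$; running the reasoning of Case~2a of Theorem~\ref{1}, the appearance of any three of the $a_i$ in a single block or in two consecutive blocks forces a $3$-AP mod $N$ as a subsequence of $\pi$, $\pi^R$, or the concatenation $\pi\pi^R$, contradicting the choice of $\pi$. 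If $v = k-1$, the residues alternate as $r, r+2^{k-1}, r, r+2^{k-1}$, so $a_1, a_3$ share a residue class and $a_2, a_4$ share another; the alternation between $\pi$ and $\pi^R$ in consecutive blocks is exactly what prevents this interleaving, in direct parallel to Case~2b of Theorem~\ref{1}.

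The main obstacle will be controlling how far a $4$-AP can spread over blocks in the positive-integer setting, which lacks the symmetry around $0$ that bounded the relevant span in Theorem~\ref{1}. When the progression straddles three or four blocks, one needs magnitude bounds relating $a_1$, $a_4$, and $d$ to the exponential block boundaries $N^{i-1}$ to rule out certain configurations, and the Case~2c-style inductive descent (handling $d \equiv 0 \pmod{8}$ in Theorem~\ref{1}) has no direct analogue here because the constraint is on $d \not\equiv 0 \pmod{N}$ rather than the complementary one. Once the block sizes are tuned large enough to force any such $4$-AP into at most two consecutive blocks, the residue-based arguments above finish the proof.
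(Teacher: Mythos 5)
Your construction does not work: with uniform blocks that each contain all residue classes mod $N$, the sub-block ordering inside a block imposes no constraint whatsoever on a $4$-AP whose terms lie in distinct blocks, and such $4$-APs with odd common difference are unavoidable. Concretely, take $N=2$ and your blocks $B_i=[2^{i-1},2^i-1]$: the progression $2,7,12,17$ (common difference $5$) has its terms in $B_2,B_3,B_4,B_5$ respectively, so it occurs as a subsequence of $P$ in increasing order no matter how you arrange the sub-blocks or which of $\pi,\pi^R$ you use. Your hoped-for fix, ``tune the block sizes large enough to force any such $4$-AP into at most two consecutive blocks,'' cannot succeed: the only magnitude bound available is $d<a_2$, which confines $a_2,a_3,a_4$ to at most two consecutive blocks once blocks grow geometrically with ratio at least $3$, but it says nothing about $a_1$, and the two-block configuration is itself realizable. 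For blocks $[4^{j-1},4^j)$, take $a_2=2\cdot4^{j-1}$, $d=4^{j-1}+1$ (odd), so $a_1=4^{j-1}-1\in B_{j-1}$, $a_3=3\cdot4^{j-1}+1\in B_j$, $a_4=4^j+2\in B_{j+1}$; here $a_2$ is even and $a_3$ odd, so this is a legal subsequence whenever evens precede odds in $B_j$, and a parity-shifted variant works when odds precede evens. Relatedly, your claim that three terms spread over two consecutive blocks would force a $3$-AP mod $N$ inside $\pi$, $\pi^R$ or the concatenation $\pi\pi^R$ is false: for any $x$ preceding $y$ in $\pi$, the residue $2y-x \pmod N$ occurs somewhere in the $\pi^R$ half, so $\pi\pi^R$ always contains $3$-APs mod $N$ split across the halves. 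The symmetric blocks of Theorem~\ref{1} around $0$ are what bounded $|a_1|$ and hence $|d|$ there; that leverage simply does not exist over the positive integers.

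The idea you are missing is the one the paper (following LeSaulnier and Vijay) actually uses in Theorem~\ref{4}: do not put all residue classes into a common interval, but give each residue class its own intervals on different scales. There the block $X_i^j$ consists of the integers congruent to $j$ mod $n$ in $[a^i,a^{i+n})$, and within one period the blocks appear as $X_{tn}^{s_1},X_{tn-1}^{s_2},\ldots,X_{(t-1)n+1}^{s_n}$, so the residue placed first by the $3$-AP-mod-$n$-avoiding permutation $S$ occupies the widest range. With this staggering, the bounds $d<m_2$, $m_3<2a^{j+n}$, $m_4<3a^{j+n}\le a^{j+n+1}$ force the block indices of $m_2,m_3,m_4$ to decrease within a period (or wrap in a controlled way), which translates into their residues appearing with strictly increasing positions $l_1<l_2<l_3$ in $S$; since those residues form a $3$-AP mod $n$ when $n\nmid d$, this contradicts the choice of $S$. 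In other words, the ordering constraint has to come from the interaction between residue class and interval scale, not from sub-block order inside residue-complete blocks; without that, no choice of $\pi$, of alternation pattern, or of block growth rate can exclude the progressions exhibited above.
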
 
Their construction was based on separating integers into different blocks based on their residues modulo $2$, then ordering the blocks and scaling their intervals to have different lengths based on those residues. We appropriately apply the same techniques for residues modulo $2^k$ for $k\geq 1$ to achieve our result.

As $n$ is $3$-permissible if and only if $n$ is a power of $2$, it is equivalent to prove that for any $3$-permissible $n$, the positive integers can be permuted so that all $4$-APs have a common difference divisible by $n$. We will use the permutation in the theorem later in the paper.

\begin{theorem}\label{4} For every $n$ that is $3$-permissible, there exists a permutation of the positive integers such that all $4$-APs in the permutation have common difference divisible by $n$. \end{theorem}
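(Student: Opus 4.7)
The plan is to adapt the block structure of Theorem~\ref{1}, replacing the modulus $8$ with $n$. Since $n$ is 3-permissible, fix a permutation $\sigma$ of $[0, n-1]$ that avoids 3-APs mod $n$. Partition the positive integers into blocks $X_1, X_2, X_3, \ldots$ of rapidly growing size (for concreteness, $|X_i|$ equal to a growing power of $n$), where each $X_i$ is a permutation of a contiguous interval whose length is a multiple of $n$. Inside $X_i$, partition the integers into $n$ sub-blocks by residue mod $n$ and list the sub-blocks in the order $\sigma(0),\sigma(1),\ldots,\sigma(n-1)$ when $i$ is odd and in the reverse order when $i$ is even, mirroring the role of the two residue orderings in Theorem~\ref{1}. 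Within each sub-block, apply the same recipe recursively at a smaller scale, so the ordering within a residue class is itself $n$-compatible. Let $P = X_1, X_2, X_3, \ldots$.

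To show every 4-AP in $P$ has common difference divisible by $n$, assume for contradiction that $b_1, b_2, b_3, b_4$ is a 4-AP in $P$ with common difference $d$ not divisible by $n$, and let $r_t \equiv b_t \pmod n$. Because $n$ is a power of $2$, either the $r_t$ are pairwise distinct (when $2d$ is not divisible by $n$) or $r_1 = r_3 \neq r_2 = r_4$ (when $d \equiv n/2 \pmod n$). I would then follow Case~2 of Theorem~\ref{1}: if all four terms lie in a single $X_i$, the distinct-residues case exhibits a 4-AP mod $n$ with distinct terms as a subsequence of the sub-block order of $X_i$, hence a forbidden 3-AP in $\sigma$ (or its reverse); the alternating case is impossible because $b_1$ and $b_3$ share a sub-block while $b_2$ lies in a different one, contradicting sub-block contiguity. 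If $b_1, b_2 \in X_i$ and $b_3, b_4 \in X_{i+1}$, the arguments of Cases~2a and 2b of Theorem~\ref{1} carry over: Case~2b uses that residues appear in opposite orders in consecutive blocks, while Case~2a uses that the concatenation of $\sigma$ with its reverse contains no 4-AP mod $n$ with distinct terms as a subsequence. Spans across more than two consecutive blocks are excluded by the rapid growth of $|X_i|$, as in Case~1 of Theorem~\ref{1}.

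The main obstacle is verifying the Case~2a property for general $n$: that the concatenation of $\sigma$ with its reverse contains no 4-AP mod $n$ with distinct terms. This is strictly stronger than $\sigma$ being 3-AP free mod $n$ and so constrains the choice of $\sigma$. I would take $\sigma$ to be the bit-reversal permutation of $[0, n-1]$, generalising the pattern $(0,4,2,6,1,5,3,7)$ used in Theorem~\ref{1} for $n = 8$, and verify the required property by induction on $k = \log_2 n$, reducing a hypothetical 4-AP in $\sigma\sigma^R$ at modulus $n$ to a 4-AP at modulus $n/2$. Because the statement permits 4-APs with common difference divisible by $n$, the recursive sub-case analogous to Case~2c of Theorem~\ref{1} (where the common difference is divisible by the current scale) is unnecessary, and this is what makes the argument noticeably shorter than the proof of Theorem~\ref{1}.
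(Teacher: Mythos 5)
There is a genuine gap, and it is structural rather than a missing verification. Your block-plus-alternating-residue-order construction (Theorem~\ref{1} with $8$ replaced by $n$) gives you, for a hypothetical $4$-AP $b_1,b_2,b_3,b_4$ with $d\not\equiv 0\pmod n$, only the splits you list: all four terms in one block, or a $2+2$ split over consecutive blocks. But the actual possibilities are wider. Rapid growth only shows that once $b_2\in X_i$, the later terms $b_3,b_4$ lie in $X_i\cup X_{i+1}$; it says nothing about $b_1$, which may sit in a much earlier block. This leaves the configurations $b_1\in X_g$, $b_2,b_3\in X_i$, $b_4\in X_{i+1}$ and $b_1\in X_g$, $b_2\in X_i$, $b_3,b_4\in X_{i+1}$ with $g<i$, and these are not excluded by $3$-AP-freeness of the blocks (only two, resp.\ two, of the terms share a block) nor by your residue-order argument: in those splits the ordering of $P$ imposes only a single constraint of the form ``$r_2$ precedes $r_3$ in $\sigma$'' (resp.\ ``$r_3$ precedes $r_4$''), which rules out only about half the candidate residue patterns and cannot contradict $3$-AP-freeness of $\sigma$ mod $n$. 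A size count shows such progressions are numerically realizable in your construction, so the proof cannot be completed along these lines; this is exactly why the $5$-term hypothesis is needed in Theorem~\ref{1} (there, either three trailing terms land in one block or the middle four give a genuine $2+2$ split), and a four-term conclusion needs a different construction. Your unproven claim that the bit-reversal permutation concatenated with its reverse contains no split $4$-AP mod $n$ is a second, smaller gap, but the missing cases are the decisive one.

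The paper's proof of Theorem~\ref{4} sidesteps this by following the LeSaulnier--Vijay idea rather than Theorem~\ref{1}: each block $X_i^j$ contains only one residue class $j$ mod $n$, taken from an interval $[a^i,a^{i+n})$, and the blocks for the residues $s_1,s_2,\ldots,s_n$ within a round are given staggered scales (the block for $s_1$ is the largest, for $s_n$ the smallest). With one residue per block, the first term $m_1$ is needed only to bound $d<m_2$, and the staggered interval lengths force the blocks of $m_2,m_3,m_4$ to correspond to positions $l_1<l_2<l_3$ in $S$; since these residues are distinct when $d\not\equiv 0\pmod n$, they would form a $3$-AP mod $n$ inside $S$, contradicting $3$-permissibility. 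That design extracts three order constraints from only three of the four terms, which is precisely what your single-interval blocks cannot do.
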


\begin{proof} Choose $a\geq 3$. Let $X_{i}^j$ be a permutation of the integers equivalent to $j\pmod n$ in $[a^{i},a^{i+n})$ that avoids $3$-APs. We will refer to $X_{i}^j$ as a block. Note that a block only contains integers from one residue class mod $n$. Then let $S=s_1,s_2,\ldots,s_n$ be a permutation of the integers in $[1,n]$ such that $S$ avoids $3$-APs mod $n$. Now define $r(i)$ as the unique integer $r\in[1,n]$ such that $r\equiv i \pmod n$ and let $b_i=s_{r(i)}$, noting that $b_i=b_{i+n}$. Finally let $c_i=\lceil \frac{i}{n}\rceil n-r(i)+1$. Then 
$$R=X_{c_{-n+1}}^{b_1},X_{c_{-n+2}}^{b_2},\ldots,X_{c_{-n+i}}^{b_i},\ldots,X_{c_{-1}}^{b_{-1}},X_{c_0}^{b_0}, X_{c_1}^{b_1}, X_{c_2}^{b_2}, X_{c_3}^{b_3},\ldots, X_{c_i}^{b_i},\ldots, $$ which means
$$R=X_{0}^{s_1},X_{-1}^{s_2},X_{-2}^{s_3},\ldots,X_{-i}^{s_{i+1}},\ldots,X_{-n+2}^{s_{n-1}},X_{-n+1}^{s_n}, X_{n}^{s_1},X_{n-1}^{s_2},X_{n-2}^{s_3},\ldots,X_{n-i}^{s_{i+1}},\ldots,X_{2}^{s_{n-1}}, X_{1}^{s_{n}},$$ $$ X_{2n}^{s_1},X_{2n-1}^{s_2},\ldots,X_{n+2}^{s_{n-1}}, X_{n+1}^{s_n}, X_{3n}^{s_1}, \ldots,X_{tn}^{s_1},X_{tn-1}^{s_2},\ldots,X_{tn-i}^{s_{i+1}},\ldots,X_{(t-1)n+2}^{s_{n-1}},X_{(t-1)n+1}^{s_n}, X_{(t+1)n}^{s_1},\ldots.$$
This is a permutation of all positive integers as for all integers $v\geq 0$, there are exactly $n$ blocks that each contain all the integers equivalent to a distinct residue mod $n$ in the interval $[a^v,a^{v+1})$. We will show that any $4$-AP in $R$ has common difference $d\equiv 0\pmod n$. 

To prove this, we assume that $R$ contains a $4$-AP with terms $m_1,m_2,m_3,m_4,$ and common difference $d$ not equivalent to $0\pmod n$. We use this to obtain a contradiction. 

Note that if $X_i^{s_l}$ and $X_k^{s_{l'}}$ appear in $R$ then $i\equiv k\pmod n$ if and only if $s_{l}=s_{l'}$, and if $X_i^{s_l}$ appears in $R$ before $X_k^{s_l}$ then $i<k$ and all integers in $X_i^{s_l}$ are less than any integer in $X_k^{s_l}$. 

So take $d$ not equivalent to $0\pmod n$ and let $m_i\equiv s_{l_{i-1}} \pmod n$ for $i\in\{2,3,4\}$. Then let $m_2\in X_j^{s_{l_1}}$, $m_3\in X_h^{s_{l_2}}$ and $m_4\in X_k^{s_{l_3}}$. We will show that $l_1<l_2<l_3$ and use this to derive a contradiction. 

Letting $(t-1)n<j\leq t n$, we find $m_2\leq a^{j+n}$ and $d=m_2-m_1<m_2\leq a^{j+n}$ so $m_3<2\times a^{j+n}$ and $m_4<3\times a^{j+n}\leq a^{j+n+1}$. Therefore, as $m_3$ and $m_4$ appear after $m_2$ in $R$, it is clear that $$(t-1)n< j,h,k < j+n+1\text{ so }(t-1)n<j,h,k\leq j+n\leq (t+1)n.$$ The order of these blocks in $R$ is: 
$$X_{tn}^{s_1}, X_{tn-1}^{s_2}, X_{tn-2}^{s_3}, X_{tn-3}^{s_4},\ldots, X_{(t-1)n+2}^{s_{n-1}}, X_{(t-1)n+1}^{s_n}, X_{(t+1)n}^{s_1}, X_{(t+1)n-1}^{s_2}, X_{(t+1)n-2}^{s_3},\ldots, X_{tn+2}^{s_{n-1}}, X_{tn+1}^{s_n}.$$
We break this into two cases based on $h$.

\medskip
\noindent
\textbf{Case 1}: If $(t-1)n<h\leq tn$, then because $m_3\in X_h^{s_{l_2}}$ appears after $m_2\in X_j^{s_{l_1}}$ in $R$, we have $h\leq j$. If $h=j$ then $m_2$ and $m_3$ are in the same block so $m_3\equiv m_2\pmod n$ and $d\equiv 0\pmod n$ which is a contradiction. Therefore $(t-1)n<h<j\leq tn \text{ so } X_j^{s_{l_1}} \text{ and } X_h^{s_{l_2}} \text{ are of the form } X_{tn-i}^{s_{i+1}} \text{ for } 0\leq i\leq n-1,$ and it follows that $h<j$ implies $l_1<l_2$. Note that $$m_3\leq a^{h+n}\text{ and }d=m_3-m_2< a^{h+n},\text{ so } m_4=m_3+d<3\times a^{h+n}\leq a^{h+n+1}.$$ Then as $m_4\in X_k^{s_{l_3}}$ we have $k<h+n+1$, so it follows that $(t-1)n<k\leq h+n< (t+1)n$. We break this into two sub-cases based on $k$.

\medskip

\textbf{Case 1a}: If $(t-1)n<k\leq tn$, then because $m_4\in X_k^{s_{l_3}}$ appears after $m_3\in X_h^{s_{l_2}}$ in $R$, we have $k\leq h$. If $k=h$ then $m_3$ and $m_4$ are in the same block so $m_4\equiv m_3\pmod n$ and $d\equiv 0\pmod n$ which is a contradiction. Therefore 
\begin{flalign*} 
&& (t-1)n<k<h<tn \text{ and so }l_2<l_3. &&&  \hfill \diamond 
\end{flalign*}

\medskip

\textbf{Case 1b}: If $tn<k\leq h+n<(t+1)n$, then if $k=h+n$ we have $k\equiv h\pmod n$ so $s_{l_2} = s_{l_3}$. It follows that $m_4\equiv m_3\pmod n$ which implies $d\equiv 0\pmod n$ which is a contradiction. Therefore 
\begin{flalign*}
&& tn<k<h+n<(t+1)n \text{ and so }l_2<l_3. &&& \hfill \diamond
\end{flalign*}

\medskip
\noindent
Therefore if $(t-1)n<h<tn$, we have shown by the above sub-cases that $l_1<l_2<l_3$. This concludes Case 1. \hfill $\diamond$

\medskip
\noindent
\textbf{Case 2}: If $tn<h\leq j+n\leq (t+1)n$ then note that if $h=j+n$ it follows that $s_{l_1}=s_{l_2}$. So, $m_3\equiv m_2\pmod n$ and $d\equiv 0\pmod n$ which is a contradiction. Therefore, $$tn<h<j+n\leq (t+1)n\text{ and so }l_1<l_2.$$ As $tn<h<(t+1)n$ and $m_4\in X_k^{s_{l_3}}$ appears after $m_3\in X_h^{s_{l_2}}$ in $R$, it follows that either $tn<k\leq h$ or $k>(t+1)n$. But $k\leq (t+1)n$ and therefore $tn<k\leq h$. Note that if $k=h$ then $m_4\equiv m_3 \pmod n$ so $d\equiv 0\pmod n$ which is a contradiction. Therefore 
\begin{flalign*}
&& tn<k<h<(t+1)n \text{ so } l_2<l_3. &&& \hfill \diamond
\end{flalign*}

\medskip
\noindent
In either case we have that $l_1<l_2<l_3$. As $S$ is a permutation of $[1,n]$, we know $s_v\equiv s_w \pmod n$ if and only if $s_v=s_w$, which holds if and only if $v=w$. So $s_{l_1},s_{l_2},s_{l_3}$ are distinct mod $n$. $S$ avoids $3$-APs mod $n$ and $l_1<l_2<l_3$, so $s_{l_1},s_{l_2},s_{l_3}$ cannot form a $3$-AP mod $n$. This implies $m_2,m_3,m_4$ is not a $3$-AP and so $m_1,m_2,m_3,m_4$ is not a $4$-AP. 
Therefore there are no $4$-APs in $R$ with common difference not divisible by $n$.  \end{proof}

This contrasts with the result in \cite{Geneson18},
\begin{theorem}
[\cite{Geneson18}] For each integer $k > 1$, every permutation of the positive
integers contains an arithmetic progression of length $3$ with common difference not
divisible by $k$.
\end{theorem}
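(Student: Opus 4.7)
The plan is to argue by contradiction: suppose $P$ is a permutation of $\mathbb{Z}^+$ in which every 3-AP has common difference divisible by $k$, and write $\mathrm{pos}(m) = P^{-1}(m)$ for the position of $m$ in $P$. The key observation, applied at position $1$, is that if $a = P(1)$, then for each $d \geq 1$ with $k \nmid d$ the triple $a, a+d, a+2d$ is a forbidden 3-AP in which $a$ occupies the smallest position among the three, so it can fail to be monotone in $P$ only if
\[\mathrm{pos}(a + 2d) < \mathrm{pos}(a + d).\]
That is, $a + 2d$ appears before $a + d$ in $P$ whenever $k \nmid d$.

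When $k$ has any odd prime divisor $p$, pick $d_0$ with $p \nmid d_0$; then $p \nmid 2^i d_0$ for every $i \geq 0$ (as $p$ is odd), hence $k \nmid 2^i d_0$. Iterating the key observation with common differences $d_0, 2d_0, 4d_0, \ldots$ produces the infinite strictly decreasing sequence of positive integer positions
\[\mathrm{pos}(a + d_0) > \mathrm{pos}(a + 2 d_0) > \mathrm{pos}(a + 4 d_0) > \cdots,\]
which is impossible, yielding the contradiction.

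The main obstacle is the case $k = 2^s$, where the doubling chain terminates after $s$ steps (since $k \mid 2^s d_0$). To handle it I would apply the key observation inductively at $P(2), P(3), \ldots$ as well. A first consequence of the above is that $\mathrm{pos}(a + d) \geq 3$ whenever $k \nmid d$, so $P(2)$ must either satisfy $P(2) \equiv a \pmod{2^s}$ or $P(2) < a$. Iterating this analysis at each new position should force the initial segment of $P$ into a rigid structure that either omits an entire residue class or triggers the required infinite descent, ultimately contradicting that $P$ is a bijection onto $\mathbb{Z}^+$; this final step is the principal difficulty, and it likely needs to be combined with the Davis et al.\ theorem that no permutation of $\mathbb{Z}^+$ avoids all 3-APs.
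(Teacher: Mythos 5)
Your proposal cannot be checked against the paper's own proof because there is none: the theorem is quoted from \cite{Geneson18}, and the passage immediately following it in the paper establishes only the much easier companion fact that every permutation of $\mathbb{Z}^{+}$ contains a $3$-AP with common difference \emph{divisible} by $k$ (restrict to the multiples of $k$, scale by $1/k$, and invoke \cite{Davis77}). That remark says nothing about common differences \emph{not} divisible by $k$, which is what the cited theorem asserts and what you are trying to prove.

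Assessed on its own merits, your argument is correct exactly up to the point where $k$ has an odd prime factor. The position-$1$ observation --- that with $a = P(1)$ and $k \nmid d$ one must have $\mathrm{pos}(a+2d) < \mathrm{pos}(a+d)$ --- is right, and for odd $p \mid k$ the doubling chain with $p \nmid d_0$ genuinely produces an infinite descent of positions. But the gap you flag at $k = 2^{s}$ is real and is the entire content of the theorem: the chain $d_0, 2d_0, \dots$ stalls after $s$ steps once the common difference acquires $s$ factors of two, and this case already contains $k = 2$, which is the full LeSaulnier--Vijay result and is not a corollary of a one-line descent. Neither of your sketched repairs closes it. Applying the observation at positions $2, 3, \dots$ yields only local, bounded constraints on the initial segment of $P$, and you supply no mechanism by which those constraints accumulate to an impossibility; in particular your claim that ``$P(2)$ must satisfy $P(2) \equiv a \pmod{2^{s}}$ or $P(2) < a$'' is not by itself contradictory and does not visibly launch an induction. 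And the Davis et al.\ theorem that no permutation of $\mathbb{Z}^{+}$ avoids all $3$-APs is the wrong ingredient: fed through the multiples-of-$k$ scaling above it produces a $3$-AP with common difference \emph{divisible} by $k$, exactly the opposite of what is needed. Handling $k$ a power of two requires a genuinely different idea, which is what the cited papers supply and your proposal does not.
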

This is a generalisation of an analogous result in \cite{LeSaulnier11} for $k=2$. Note that if there existed an integer $k\geq 1$ and a permutation, $P$, such that $P$ contained no $3$-AP with common difference divisible by $k$, then the subsequence made up of the multiples of $k$ would contain no $3$-APs. Then, by dividing each term of this subsequence by $k$, we would produce a permutation of the positive integers that avoids $3$-APs. But no such permutation exists, as shown in \cite{Davis77}, and therefore for every integer $k\geq 1$, every permutation of the positive integers contains a $3$-AP with common difference divisible by $k$.

\section{Density Bounds}
We now prove the claimed density bounds. These improved lower bounds are given by alternating between blocks with positive and negative terms or by blocks with different residues modulo powers of $2$. This spreads out the terms and increases the lower density by reducing the size of the gaps required to avoid $3$-APs and $4$-APs. 

For the first result, we use the permutation from Theorem \ref{4}, but remove certain intervals to eliminate the remaining $4$-APs with common difference divisible by $2^k$.

\begin{theorem}\label{6} There exists a sequence of permutations of the positive integers such that $\beta_{\mathbb{Z^+}}(4)=1$.\end{theorem}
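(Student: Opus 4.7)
The plan is to bootstrap Theorem~\ref{4}: for each $k\geq 1$ I take the permutation supplied by Theorem~\ref{4} with $n=2^{k}$ and $a\geq 3$, and delete a small sliver from every block so that every $4$-AP that could still be present (all such $4$-APs have common difference divisible by $2^{k}$) is destroyed. The resulting sub-permutations will form a sequence of permutable subsets $S_{k}\subseteq\mathbb{Z}^{+}$ whose lower densities tend to $1$, giving $\beta_{\mathbb{Z}^{+}}(4)=1$.

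Concretely, let $R$ be the permutation of Theorem~\ref{4} with $n=2^{k}$, so every $4$-AP in $R$ has common difference divisible by $2^{k}$ and therefore has all four terms in a single residue class $j$ mod $2^{k}$, lying in the blocks $X_{i}^{j}$ of that one class. I would delete from each block $X_{i}^{j}$ its initial sliver consisting of the integers of residue $j$ in $[a^{i},a^{i+1})$, and let $S_{k}$ be the surviving positive integers with the order inherited from $R$. To rule out a $4$-AP $m_{1}<m_{2}<m_{3}<m_{4}$ in the sub-permutation, write $m_{\ell}\in X_{i_{\ell}}^{j}$, so that $m_{\ell}\in[a^{i_{\ell}+1},a^{i_{\ell}+n})$. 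Since each $X_{i}^{j}$ is $3$-AP free, the three terms $m_{2},m_{3},m_{4}$ cannot all share a block, so there is a cross-block consecutive pair $(m_{\ell},m_{\ell+1})$ with $\ell\in\{2,3\}$, and its index jump forces $i_{\ell+1}\geq i_{\ell}+n\geq i_{2}+n$. Because $m_{\ell+1}\geq a^{i_{\ell+1}+1}$ and $m_{\ell}<a^{i_{\ell}+n}\leq a^{i_{\ell+1}}$, this pair gives $d\geq a^{i_{\ell+1}+1}-a^{i_{\ell+1}}=(a-1)a^{i_{\ell+1}}\geq (a-1)a^{i_{2}+n}$. On the other hand $d=m_{2}-m_{1}<m_{2}<a^{i_{2}+n}$, and these two bounds force $a-1<1$, contradicting $a\geq 3$.

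For the density calculation, each exponent $i\geq 1$ labels exactly one block $X_{i}^{j}$ of $R$, so the removed sliver at that exponent accounts for precisely one residue class mod $2^{k}$ out of $[a^{i},a^{i+1})$, i.e.\ a $1/2^{k}$ fraction. A routine summation over $i$ then gives $|S_{k}\cap[1,N]|/N\to 1-1/2^{k}$ as $N\to\infty$, hence $\beta_{\mathbb{Z}^{+}}(4)\geq 1-1/2^{k}$; letting $k\to\infty$ yields the claim. The main technical hurdle will be harvesting both the upper bound $d<a^{i_{2}+n}$ and a cross-block lower bound at an index at least $i_{2}+n$ simultaneously; the essential combinatorial input is that either $(m_{2},m_{3})$ or $(m_{3},m_{4})$ must be cross-block because each $X_{i}^{j}$ is $3$-AP free, together with the fact that consecutive blocks of a fixed residue class have indices differing by exactly $n$, which is precisely what makes the cross-block jump large enough to produce the contradiction when $a\geq 3$.
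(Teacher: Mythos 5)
Your proposal is correct and follows the same overall strategy as the paper's proof: take the permutation $R$ of Theorem~\ref{4} with $n=2^k$, delete a thin piece from every block so that the only $4$-APs that could survive (those with common difference divisible by $2^k$, hence confined to the blocks of a single residue class) are destroyed, and let $k\to\infty$. The difference lies in which piece is deleted and how the contradiction is extracted. The paper removes the top part $[\tfrac{a^{i+n}}{2},a^{i+n})$ of each block $X_i^j$, so that $m_2<\tfrac{a^{i+n}}{2}$ forces $d<\tfrac{a^{i+n}}{2}$ and hence $m_3,m_4<a^{i+n}$; since the next block of class $j$ starts at $a^{i+n}$, all of $m_2,m_3,m_4$ are trapped in $X_i^j$, contradicting its $3$-AP freeness. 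You instead remove the bottom sliver $[a^i,a^{i+1})$, creating a multiplicative gap of factor $a$ at the start of every block of a given class, so any cross-block step of the progression needs $d>(a-1)a^{i_{\ell+1}}\geq (a-1)a^{i_2+n}$, while $d=m_2-m_1<m_2<a^{i_2+n}$; both mechanisms are valid, and yours yields the slightly cleaner density $1-2^{-k}$ versus the paper's $1-\tfrac{a}{2n(a-1)}$. Two small points to patch when writing it up: first, a monotone $4$-AP may be decreasing, and your inequality $d>(a-1)a^{i_{\ell+1}}$ presupposes $d>0$; the decreasing case is immediate, though, since within one residue class every later block consists of larger integers, so a decreasing progression with difference divisible by $2^k$ would have to lie entirely inside one ($3$-AP free) block. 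Second, the density computation is only asserted; it is indeed routine (in each interval $[a^i,a^{i+1})$ exactly one of the $2^k$ residue classes is removed, up to $O(1)$ boundary effects and the finitely many small indices), but it should be carried out explicitly, as the paper does for its variant.
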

\begin{proof}  Choose $k\geq 1$ and let $n=2^k$. Let $a\geq 3$. Then let $R$ be the permutation as defined in Theorem \ref{4}. Define $R'$ to be a permutation with terms in the same relative order as $R$, but with the integers in $[\frac{a^{i+n}}{2},a^{i+n})$ removed for each block $X_i^j$ in $R$. As $R$ avoids $4$-APs with common difference not divisible by $n$, it follows that $R'$ avoids such $4$-APs as well. Assume $R'$ contains a $4$-AP with common difference, $d$, which must be divisible by $n$. Then denote this $4$-AP by $m_1, m_2, m_3, m_4$ and let the block $m_2$ belongs to be $X_i^j$, so $m_2<\frac{a^{i+n}}{2}$. Therefore, $$d=m_2-m_1<m_2<\frac{a^{i+n}}{2} \text{ and so } m_3=m_2+d<a^{i+n}.$$ As we have $d\equiv 0\mod n$, this means $m_1\equiv m_2\equiv m_3\equiv m_4 \equiv j\mod n$. As $m_3$ appears in $R'$ after $m_2$, we see that $m_3$ is either in $X_i^j$ or $m_3$ is in $X_f^j$ for $f\geq i+n$. In the latter case, $m_3\geq a^{i+n}$ which is a contradiction. So $m_3$ is in $X_i^j$. Similarly, $m_4=m_3+d<a^{i+n}$ and $m_4$ must be in $X_i^j$. However, $m_2, m_3, m_4$ form a $3$-AP in $X_i^j$ which is a contradiction as $X_i^j$ avoids $3$-APs. Therefore there are no $4$-APs in $R'$.  
As $\frac{|R'\cap[1,a^N]|}{a^N}$, is equal to \begin{align*} & \frac{a^n-2}{2n}\frac{\sum_{i=1}^{N-n}a^i}{a^N}+\frac{a^N-a^{N-1}}{na^N}+\frac{a^N-a^{N-2}}{na^N}+\ldots+\frac{a^N-a^{N-n+1}}{na^N} \\ &=\frac{a^n-2}{2n}\frac{a^{N-n+1}-a}{a^N(a-1)}+\frac{n-1}{n}-\frac{1}{n}\left(\frac{1}{a}+\frac{1}{a^2}+\ldots+\frac{1}{a^{n-1}}\right) \\ 
&=\frac{2n-1}{2n} + \frac{1}{2n(a-1)} + \frac{1}{na^{N-1}(a-1)}-\frac{1}{2na^{N-n-1}(a-1)}-\frac{1}{na^{n-1}(a-1)}-\frac{1}{n}\left(\frac{1-\frac{1}{a^{n-1}}}{a-1}\right)
\\ &=\frac{2n-1}{2n} + \frac{1}{na^{N-1}(a-1)}-\frac{1}{2na^{N-n-1}(a-1)}-\frac{1}{2n(a-1)}. \end{align*}
Therefore $$\liminf_{N\rightarrow \infty} \frac{|R'\cap[1,N]|}{N}= \frac{2n-1}{2n}-\frac{1}{2n(a-1)} = 1 - \frac{a}{2n(a-1)}.$$ 
As $k$ can be chosen to be arbitrarily large, $n=2^k$ can be arbitrarily large (while $a\geq 3$) so as $$\lim_{n\rightarrow \infty} 1 - \frac{a}{2n(a-1)} = 1,$$ we have $\beta_{\mathbb{Z}^+}(4)=1$.
\end{proof}

This leaves $\alpha_{\mathbb{Z}^+}(3)$ and $\beta_{\mathbb{Z}^+}(3)$ as the only functions for the positive integers to be resolved. 

\begin{theorem}\label{7} There exists a sequence of permutations of the integers such that $\beta_{\mathbb{Z}}(4)\geq\frac{2}{3}$ and $\alpha_{\mathbb{Z}}(4)=1$. \end{theorem}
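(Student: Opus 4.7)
The plan is to adapt the construction of Theorem~\ref{6} to the integers, pairing each positive block with a mirror negative block and carefully ruling out the mixed-sign $4$-APs that this could create.

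For the upper density $\alpha_\mathbb{Z}(4)=1$, I would take the permutation $R'$ from Theorem~\ref{6} with parameter $n=2^k$, which permutes a subset $S \subseteq \mathbb{Z}^+$ of lower density $1 - \tfrac{a}{2n(a-1)}$. Let $-R'$ denote the mirror permutation of $-S$. A concatenation $T = R',-R'$ avoids $4$-APs within each half by Theorem~\ref{6}, and the underlying set $S \cup (-S)$ satisfies
$$\limsup_{N\to\infty}\frac{|(S \cup (-S)) \cap [-N,N]|}{2N} = \limsup_{N\to\infty}\frac{|S \cap [1,N]|}{N},$$
which can be made arbitrarily close to $1$ by choosing $k$ large. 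Sending $k \to \infty$ then yields $\alpha_\mathbb{Z}(4)=1$, provided mixed-sign $4$-APs can be ruled out.

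For the lower density $\beta_\mathbb{Z}(4) \geq \tfrac{2}{3}$, the concatenation above only gives integer lower density $\delta_k/2 \leq 1/2$, because at times $N$ when only the positive half of $T$ has been exhibited, no negatives lie in $[-N,N]$. To recover $\tfrac{2}{3}$, I would interleave positive and negative blocks at each scale: for each block index $i$, immediately follow the block $X_i^j$ by its negative mirror, so that at any large $N$ the positive and negative portions of $T$ that have already been placed are both close to their target densities. A direct computation in the style of the proof of Theorem~\ref{6} should then show that the liminf of $|T \cap [-N,N]|/(2N)$ exceeds $\tfrac{2}{3}$ for suitable $a$ and $n = 2^k$, exploiting that the alternating scheme closes the gap between positive-only and negative-only epochs in which the density dips.

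The main obstacle, shared by both parts, is ruling out mixed-sign monotone $4$-APs. Given a putative progression $m_1,m_2,m_3,m_4$ with common difference $d$, I would split on the divisibility of $d$ by $n$. If $d \not\equiv 0 \pmod n$, the four residues mod $n$ are distinct and form a $4$-AP mod $n$; choosing the interleaving so that the combined (positive, negative) residue sequence extends a $3$-AP-free-mod-$n$ ordering as in Theorem~\ref{4}, no such progression can arise. If $d \equiv 0 \pmod n$, I would mimic the induction from Case~2c of Theorem~\ref{1}: all four terms share a common residue mod $n$, and dividing by $n$ produces a smaller $4$-AP in a scaled-down copy of the same construction, which is excluded by induction, with the base case handled by direct inspection of the lowest-level blocks. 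The technical core is arranging the interleaving so that the residue-based and position-based constraints rule out mixed $4$-APs on every scale simultaneously; I expect the bookkeeping for the $\beta_\mathbb{Z}(4) \geq 2/3$ case to be the most delicate, since the density computation requires tight control on the relative sizes of the positive and negative blocks at every $N$.
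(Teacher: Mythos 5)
Your proposal is a sketch with the decisive steps missing, and it is not the route the paper takes. First, the object $T=R',-R'$ is not a permutation of $S\cup(-S)$ in the required sense: $R'$ is already infinite, so the terms of $-R'$ never occur at any finite position, and the densities $\alpha_{\mathbb{Z}}(4)$, $\beta_{\mathbb{Z}}(4)$ are in any case properties of the underlying \emph{set} $R$, not of how early its elements are ``exhibited'' by the permutation. So your worry about epochs in which only the positive half has appeared is vacuous, while the real issue --- which you correctly identify but then defer with ``provided mixed-sign $4$-APs can be ruled out'' and ``choosing the interleaving so that\dots'' --- is never resolved: no interleaving is specified and no argument is given that it kills monotone $4$-APs with terms of both signs. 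Note also that if your symmetric-mirror plan worked as stated, the set $S\cup(-S)$ would give $\beta_{\mathbb{Z}}(4)$ arbitrarily close to $1$, strictly stronger than the theorem; the fact that the paper only reaches $\tfrac23$ is a signal that the cross-sign progressions force one to discard part of each scale, and that this is exactly where the work lies. Finally, the proposed induction modelled on Case~2c of Theorem~\ref{1} has no foothold here: that argument used the self-similar recursion $X_{k+1}=8X_k+a$, whereas the blocks $X_i^j$ of Theorem~\ref{6} are arbitrary $3$-AP-free permutations of residue classes in intervals, so ``dividing by $n$ yields a smaller copy of the same construction'' is unjustified; moreover the size argument that handled $d\equiv 0\pmod n$ in Theorem~\ref{6} (namely $d<m_2$) fails once $m_1<0<m_2$ is allowed.

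For comparison, the paper's construction is different and self-contained: it takes $T=X_2,Y_1,X_4,Y_3,\dots$ where $X_i$ is a $3$-AP-free permutation of the \emph{even} integers in $[-\tfrac{a^{i+2}}{3}+1,-a^i]\cup[a^i,\tfrac{a^{i+2}}{3}-1]$ and $Y_i$ the analogous block of \emph{odd} integers, with $a>3$ divisible by $3$. Each block is symmetric about $0$, so sign-mixing inside a block is already handled by $3$-AP-freeness, and progressions spanning blocks are excluded by parity-plus-magnitude case analysis (the parity of $d$ forces the parity of $a_3,a_4$, and the gap between $\tfrac{a^{i+2}}{3}$ and $a^{i+2}$ makes the needed terms too large or too small). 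The deliberately omitted ranges are what cap the lower density at $\tfrac23-\tfrac{1}{3(a-1)}$, while evaluating the count at $N=\tfrac{a^n}{3}$ gives upper density $1-\tfrac{1}{a-1}$, whence $\alpha_{\mathbb{Z}}(4)=1$ as $a\to\infty$. To turn your proposal into a proof you would have to supply precisely the ingredient it postpones: an explicit interleaving of positive and negative material together with a complete case analysis of mixed-sign monotone $4$-APs, including the $d\equiv 0\pmod n$ case without the unavailable self-similarity.
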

\begin{proof} Consider the permutation $$T=X_2,Y_1,X_4,Y_3,\ldots,X_{2n},Y_{2n-1},X_{2n+2},Y_{2n+1},\ldots,$$ where $X_i$ is a permutation of the even integers in $[a^i, \frac{a^{i+2}}{3}-1]\cup[-\frac{a^{i+2}}{3}+1, -a^i]$ which avoid $3$-APs and $Y_i$ is a permutation of the odd integers in $[a^i, \frac{a^{i+2}}{3}-1]\cup[-\frac{a^{i+2}}{3}+1, -a^i]$ which avoid $3$-APs for $a>3$, with $a$ divisible by $3$. We will show $T$ avoids $4$-APs. Assume there exists a $4$-AP with terms $a_1,a_2,a_3,a_4,$ with common difference $d$. 

The first case covers when $a_1$ and $a_2$ are of the same parity and belong to the same block, the second case is when $a_1$ and $a_2$ are of the same parity and in different blocks, the third and fourth cases are when $a_1$ is even and $a_2$ is odd and the fifth case is when $a_1$ is odd and $a_2$ is even. So all cases are covered below.

\medskip
\noindent
$\textbf{Case 1}$: Let $a_1$ and $a_2$ belong to the same block. Without loss of generality, let this be $X_i$. Then $a_3$ cannot be in $X_i$, and $a_1,a_2,d,$ are even. It follows that $|d|<\frac{2}{3}a^{i+2}$ so $-a^{i+2}<a_3<a^{i+2}$. As $a_3$ is even and appears after $X_i$, it follows that $|a_3|\geq a^{i+2}$. So this case cannot occur. \hfill $\diamond$

\medskip
\noindent
$\textbf{Case 2}$: Let $a_1$ and $a_2$ be of the same parity in different blocks. Without loss of generality, let them be even. So let $a_1$ be in $X_i$ and $a_2$ be in $X_j$. Then $j>i+1$ and  $d$ is even. So $|d|<\frac{a^{j+2}}{3}+\frac{a^{i+2}}{3}$ and $|a_3|<\frac{2}{3}a^{j+2}+\frac{a^{i+2}}{3}<a^{j+2}$. As $a_3$ is even, if it appears after $X_j$ then $|a_3|\geq a^{j+2}$. Therefore $a_3$ is in $X_j$. It follows that $|a_4|<\frac{2}{3}a^{j+2}+\frac{a^{i+2}}{3}< a^{j+2}$ and similarly $a_4$ is in $X_j$. But then $a_2,a_3,a_4$ are all in $X_j$ which cannot occur as $X_j$ avoids $3$-APs. The case where $a_1$ is in $Y_i$ and $a_2$ is in $Y_j$ is similar. \hfill $\diamond$

\medskip
\noindent
$\textbf{Case 3}$: Let $a_1$ be in $X_i$ and $a_2$ be in $Y_j$ for $j>i$. Then $d$ is odd and $|d|<\frac{a^{j+2}}{3}+\frac{a^{i+2}}{3}$. Therefore $|a_3|<\frac{2}{3}a^{j+2}+\frac{a^{i+2}}{3}<a^{j+3}$. But as $a_3$ is even and occurs after $Y_j$, we have that $|a_3|\geq a^{j+3}$. So this case cannot occur. \hfill $\diamond$

\medskip
\noindent
$\textbf{Case 4}$: Let $a_1$ be in $X_i$ and $a_2$ be in $Y_{i-1}$. Then $d$ is odd and so $a_3$ is even and appears after $Y_{i-1}$, therefore $|a_3|\geq a^{i+2}$. We see that $$|d|=|a_2-a_1|<\frac{a^{i+1}}{3}+\frac{a^{i+2}}{3}\text{ and so }|a_3|=|a_2+d|<\frac{a^{i+2}}{3}+\frac{2}{3}a^{i+1}<a^{i+2}.$$ So this case cannot occur. \hfill $\diamond$

\medskip
\noindent
$\textbf{Case 5}$: Let $a_1$ be in $Y_i$ and $a_2$ be in $X_j$. Then $j>i+2$, and $d$ is odd with $|d|<\frac{a^{j+2}}{3}+\frac{a^{i+2}}{3}$. So $|a_3|<\frac{2}{3}a^{j+2}+\frac{a^{i+2}}{3}<a^{j+3}$ and as $a_3$ is odd and appears after $X_j$, this means $a_3$ is in $Y_{j-1}$ or $Y_{j+1}$.

\medskip

$\textbf{Case 5a}$: Let $a_3$ be in $Y_{j-1}$. Then $$|a_3|<\frac{a^{j+1}}{3}\text{ and so }|a_4|=|a_3+d|<\frac{a^{j+2}}{3}+\frac{a^{j+1}}{3}+\frac{a^{i+2}}{3}<a^{j+2}.$$ But as $a_4$ is even and appears after $Y_{j-1}$, we see that $|a_4|\geq a^{j+2}$. So this case cannot occur. \hfill $\diamond$

\medskip

$\textbf{Case 5b}$: Let $a_3$ be in $Y_{j+1}$. Then $$|a_3|<\frac{a^{j+3}}{3}\text{ so }|a_4|<\frac{a^{j+3}}{3}+\frac{a^{j+2}}{3}+\frac{a^{i+2}}{3}<a^{j+4}.$$ But as $a_4$ is even and appears after $Y_{j+1}$, we see that $|a_4|\geq a^{j+4}$. So this case cannot occur. \hfill $\diamond$

\medskip
\noindent
By the above sub-cases, we see that the case where $a_1$ is in $Y_i$ and $a_2$ is in $X_j$ cannot occur. This concludes Case 5.\hfill $\diamond$

\medskip
\noindent
The above cases show that $T$ avoids $4$-APs. 
$$\frac{|T\cap[-a^n,a^n]|}{2a^n}=\frac{a^2-3}{6}\frac{\sum_{i=1}^{n-2}a^i}{a^n}+\frac{a^n-a^{n-1}}{2a^n}=\frac{a^2-3}{6}\frac{a^{n-1}-a}{a^n(a-1)}+\frac{1}{2}-\frac{1}{2a}=\frac{a-3}{6(a-1)}-\frac{a^2-3}{6a^{n-1}(a-1)}+\frac{1}{2}.$$
It follows that $$\liminf_{N\rightarrow\infty}\frac{|T\cap[-N,N]|}{2N}=\frac{a-3}{6(a-1)}+\frac{1}{2}=\frac{2}{3}-\frac{1}{3(a-1)}.$$
As $a$ can be arbitrarily large, $\beta_{\mathbb{Z}}(4)\geq \frac{2}{3}$. 

\begin{align*}
\frac{|T\cap[-\frac{a^n}{3},\frac{a^n}{3}]|}{2\frac{a^n}{3}}&=\frac{a^2-3}{2}\frac{\sum_{i=1}^{n-2}a^i}{a^n}+3\frac{\frac{a^n}{3}-a^{n-1}}{2a^n}\\&=\frac{a^2-3}{2}\frac{a^{n-1}-a}{a^n(a-1)}+\frac{1}{2}-\frac{3}{2a}\\&=\frac{a-3}{2(a-1)}-\frac{a^2-3}{2a^{n-1}(a-1)}+\frac{1}{2}.\end{align*}
So $$\limsup_{N\rightarrow\infty}\frac{|T\cap[-N,N]|}{2N}=\frac{a-3}{2(a-1)}+\frac{1}{2}=1-\frac{1}{a-1}.$$
As $a$ can be arbitrarily large, $\alpha_{\mathbb{Z}}(4)=1$. 
\end{proof}

\begin{theorem}\label{8} There exists a sequence of permutations of the integers such that $\beta_{\mathbb{Z}}(3)\geq \frac{3}{10}$. \end{theorem}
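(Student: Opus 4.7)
The plan is to construct, for each large integer parameter $a$, a permutation $T$ of a subset $B\subset\mathbb{Z}$ that avoids $3$-APs with $\liminf_{N\to\infty}|T\cap[-N,N]|/(2N)=(3a-1)/(10a)$, and then let $a\to\infty$. Following the alternating-sign template described in the introduction, I would set
\[
T \;=\; X_1,\,Y_1,\,X_2,\,Y_2,\,X_3,\,Y_3,\,\ldots,
\]
where $X_i$ is a $3$-AP-avoiding permutation of the positive integers in $[5a\cdot 2^{2i-1},\,(8a-1)\cdot 2^{2i-1}]$ and $Y_i$ is a $3$-AP-avoiding permutation of the negative integers whose absolute values lie in $[5a\cdot 2^{2i},\,(8a-1)\cdot 2^{2i}]$ (both exist by \cite{Davis77}). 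The interval endpoint ratio $(8a-1)/(5a)$ is strictly less than $8/5$, and this bound will control every subsequent case.

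To show $T$ avoids $3$-APs I would assume $a_1,a_2,a_3$ is a progression in permutation order and split by sign pattern. A progression inside a single block is ruled out by construction. For an all-positive progression across distinct blocks, I would separately treat (i) $a_1\in X_i$ and $a_2,a_3\in X_j$, (ii) $a_1,a_2\in X_i$ and $a_3\in X_j$, and (iii) $a_1,a_2,a_3$ in three distinct $X$-blocks. In each sub-case the factor $4$ gap between consecutive same-sign blocks together with the ratio bound forces the value of the missing term $2a_2-a_3$, $2a_2-a_1$, or $a_1+a_3-2a_2$ to lie strictly outside the target interval. The all-negative analysis is symmetric. For mixed signs the patterns pos-neg-pos and neg-pos-neg cannot be monotone; patterns pos-pos-neg and neg-neg-pos force the two same-sign terms into a common block (because positive blocks appear in increasing natural order in $T$) and then demand an interval ratio exceeding $2$, which fails; and the remaining patterns pos-neg-neg and neg-pos-pos reduce to the identity $|a_3|=a_1+2|a_2|$ or $a_3=2a_2+|a_1|$, again incompatible with the interval sizes under $(8a-1)/(5a)<8/5$.

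For the density, let $B^+=\{|b|:b\in B\}$, so that $|T\cap[-N,N]|=|B^+\cap[1,N]|$ and $B^+$ is the disjoint union of the intervals $I_j=[5a\cdot 2^j,\,(8a-1)\cdot 2^j]$ for $j\geq 1$. The ratio $|B^+\cap[1,N]|/N$ oscillates, and its liminf is attained just below $N=5a\cdot 2^n$, where it is asymptotic to $(3a-1)/(5a)$. Hence
\[
\liminf_{N\to\infty}\frac{|T\cap[-N,N]|}{2N}\;=\;\frac{3a-1}{10a}\;\xrightarrow{\,a\to\infty\,}\;\frac{3}{10}\,,
\]
so $\beta_{\mathbb{Z}}(3)\geq 3/10$. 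The main obstacle will be the mixed-sign sub-cases: one must verify that the single inequality $(8a-1)/(5a)<8/5$ simultaneously eliminates the two-in-a-block positive AP of case (ii) and the pos-neg-neg (and neg-pos-pos) patterns, and arranging this alignment is the delicate part of the proof; once done, the density computation collapses to a short geometric sum.
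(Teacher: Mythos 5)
Your construction is essentially the paper's: alternating positive/negative blocks whose absolute values fill geometric intervals of ratio about $\tfrac{8}{5}$ (the paper takes $X_i$ on $[2^i,\lfloor\tfrac{8}{5}2^i\rfloor]$ and $Y_i$ on $[-\lfloor\tfrac{8}{5}2^i\rfloor,-2^i]$, getting lower density exactly $\tfrac{3}{10}$ from a single permutation, rather than your scaled family with ratio $(8a-1)/(5a)$ and $a\to\infty$), each block internally permuted to avoid $3$-APs via Davis et al., with the same case analysis forcing the third term into a gap or an opposite-sign block and the same geometric-series density count. The key inequalities you cite do check out, so the proposal is correct and follows the same route as the paper.
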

\begin{proof} Consider the permutation $$K=Y_0,X_1,Y_2,X_3,Y_4,\ldots,X_{2k-1},Y_{2k},\ldots$$
where $X_i$ is a permutation of $[2^i,\lfloor\frac{8}{5}2^i\rfloor]$ which avoids $3$-APs and $Y_i$ is a permutation of $[-\lfloor\frac{8}{5}2^i\rfloor,-2^i]$ which avoids $3$-APs. We shall show that $K$ avoids $3$-APs. Let $a_1,a_2,a_3$ be a $3$-AP with common difference $d$ and let $a_1$ be in the block $X_i$. The cases where $a_1$ are in $Y_i$ are similar. Note that as $a_2$ appears in $K$ after $a_1$, it follows that $a_2$ must be in $X_i$ or $X_j$ for $j>i+1$ or $Y_k$ for $k>i$.

\medskip
\noindent
$\textbf{Case 1}$: If $a_2$ is in $X_i$ then $|d|<\frac{3}{5}2^i$. So $0<\frac{2}{5}2^i<a_3<\frac{11}{5}2^i<2^{i+2}$. But as $a_3$ cannot be in $X_i$ and so appears in $K$ after $X_i$, then $a_3\geq2^{i+2}$ or $a_3\leq-2^{i+1}<0$. \hfill $\diamond$

\medskip
\noindent
$\textbf{Case 2}$: If $a_2$ is in $X_j$ for $j>i+1$ then $2^j-\frac{8}{5}2^i<d<\frac{8}{5}2^j-2^i$ so $$ \frac{8}{5}2^j=2^{j+1}-\frac{8}{5}2^{j-2}\leq2^{j+1}-\frac{8}{5}2^i<a_3<\frac{16}{5}2^j-2^i<2^{j+2}.$$
But there are no terms in $K$ between $\frac{8}{5}2^j$ and $2^{j+2}$. \hfill $\diamond$

\medskip
\noindent
$\textbf{Case 3}$: If $a_2$ is in $Y_k$ for $k>i$ then $-\frac{8}{5}2^k-\frac{8}{5}2^i<d\leq-2^k-2^i$ so $$-2^{k+2}=-\frac{16}{5}2^k-\frac{8}{5}2^{k-1}\leq-\frac{16}{5}2^k-\frac{8}{5}2^i<a_3\leq-2^{k+1}-2^i<-\frac{8}{5}2^k.$$ 
But there are no terms in $K$ in this range. \hfill $\diamond$

\medskip
\noindent
By the above cases, $K$ avoids $3$-APs. By considering $$\lim_{N\rightarrow\infty} \frac{|K\cap[-2^N,2^N]|}{2\times2^N}=\lim_{N\rightarrow\infty} \frac{3}{5}\frac{\sum_{i=0}^{N-1}2^i}{2^{N+1}}=\frac{3}{5}\lim_{N\rightarrow\infty} \frac{2^N-1}{2^{N+1}}=\frac{3}{10},$$ we see that the lower density of $K$ is $\frac{3}{10}$ and so $\beta_{\mathbb{Z}}(3)\geq\frac{3}{10}$. 
\end{proof}
It is of interest to note that 
\begin{align*} \alpha_{\mathbb{Z}}(k)&=\sup_{R\subseteq\mathbb{Z}} \limsup_{n\rightarrow\infty} \frac{|R\cap[-n,n]|}{2n}\\&=\sup_{R\subseteq\mathbb{Z}} \limsup_{n\rightarrow\infty} \left(\frac{|R\cap[1,n]|}{2n}+\frac{|R\cap[-n,-1]|}{2n}\right) \\ &\leq \sup_{R\subseteq\mathbb{Z}} \limsup_{n\rightarrow\infty} \frac{|R\cap[1,n]|}{2n}+\sup_{R\subseteq\mathbb{Z}} \limsup_{n\rightarrow\infty} \frac{|R\cap[-n,-1]|}{2n} \\ &=\sup_{S\subseteq\mathbb{Z}^+} \limsup_{n\rightarrow\infty} \frac{|S\cap[1,n]|}{2n}+\sup_{S\subseteq\mathbb{Z}^+} \limsup_{n\rightarrow\infty} \frac{|S\cap[1,n]|}{2n}\\&=\sup_{S\subseteq\mathbb{Z}^+} \limsup_{n\rightarrow\infty} \frac{|S\cap[1,n]|}{n}\\&=\alpha_{\mathbb{Z^+}}(k),
\end{align*}
where the supremum is taken over all subsets $R$ of the integers (all subsets $S$ of the positive integers respectively) that avoid $k$-APs. Note that $$\alpha_{\mathbb{Z^+}}(k)=\alpha_{\mathbb{Z}}(k)=1 \text{ for all } k\geq 4 \text{ as } \alpha_{\mathbb{Z^+}}(4)=\alpha_{\mathbb{Z}}(4)=1$$ from \cite{LeSaulnier11} and Theorem \ref{7}. If the conjecture in \cite{LeSaulnier11} is true and $\alpha_{\mathbb{Z^+}}(3)=\frac{1}{2}$, then as $\frac{1}{2}\leq\alpha_{\mathbb{Z}}(3)\leq \alpha_{\mathbb{Z^+}}(3)$ by $\cite{Geneson18}$ and the above inequality, we would have $\alpha_{\mathbb{Z^+}}(3)=\alpha_{\mathbb{Z}}(3)=\frac{1}{2}$.

\section{Monotone Arithmetic Progressions Mod \texorpdfstring{$n$}{n}}
We begin by stating the below lemma from \cite{Karolyi17}. 
\begin{lemma}
[\cite{Karolyi17}] Let $G$ be a group and $N$ be a normal subgroup of $G$ such that both $N$ and $G/N$ have orderings (well orderings) avoiding $k$-APs. Then $G$ has an ordering (well ordering) avoiding $k$-APs.
\end{lemma}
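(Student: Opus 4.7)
The plan is to build the ordering on $G$ from the given orderings on $N$ and $G/N$ by a lexicographic rule on cosets, and then forbid $k$-APs in $G$ by splitting on whether the common difference of a putative progression lies in $N$.

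First I would fix a transversal, choosing a representative $g_C$ in each coset $C\in G/N$, and writing $\prec_N,\prec_{G/N}$ for the given orderings. Order $G$ by declaring, for $x=g_C n$ and $y=g_{C'} n'$ with $n,n'\in N$, that $x$ precedes $y$ iff either $C\prec_{G/N} C'$, or $C=C'$ and $n\prec_N n'$. When the two input orderings are well-orderings this lexicographic rule inherits a well-ordering of $G$: any nonempty $S\subseteq G$ has a $\prec_{G/N}$-least coset meeting it, and within that coset the elements pulled back to $N$ via $g_C^{-1}$ have a $\prec_N$-least element.

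Next I would suppose for contradiction that $G$'s ordering contains a $k$-AP, i.e.\ a subsequence $a_1,\ldots,a_k$ appearing in this order with $a_{i+1}=a_i d$ for some fixed $d\neq e$, and split on whether $d\in N$. If $d\in N$, then every $a_i=a_1 d^{i-1}$ lies in the single coset $C=a_1 N$; writing $a_i=g_C n_i$ the recurrence becomes $n_{i+1}=n_i d$, so $n_1,\ldots,n_k$ is a $k$-AP in $N$ of nonidentity common difference. By construction the $G$-ordering restricted to one coset mirrors $\prec_N$, so this AP appears in $\prec_N$-order, contradicting the hypothesis on $N$.

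If instead $d\notin N$, then $\bar d\neq\bar e$ and $\overline{a_{i+1}}=\overline{a_i}\bar d\neq\overline{a_i}$, so consecutive terms sit in distinct cosets. Because all elements of an earlier coset precede all elements of a later one, the cosets $\overline{a_1},\ldots,\overline{a_k}$ form a strictly $\prec_{G/N}$-increasing chain, hence are pairwise distinct and constitute a monotone $k$-AP in $G/N$ with common difference $\bar d$, contradicting the hypothesis on $G/N$. The one slightly delicate point---which I expect to be the main obstacle---is making sure the induced sequence in $G/N$ really is a $k$-AP with $k$ distinct terms; this is exactly what the lexicographic construction delivers, through the observation that consecutive cosets must differ once $d\notin N$, after which the strict coset-order forces all $k$ cosets to be distinct at once.
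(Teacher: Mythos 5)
Your proof is correct and handles the one delicate point well: the lexicographic (coset-first) ordering plus the case split on whether the common difference lies in $N$ is the standard argument, and the strictly increasing coset chain is exactly what guarantees the $k$ cosets are distinct, so the projected progression really is a $k$-AP in $G/N$. Note that the paper only cites this lemma from \cite{Karolyi17} without proving it, but your argument is precisely the general-group form of what the paper does concretely for cyclic groups in Lemma \ref{10}, where the blocks of fixed residue mod $m$ play the role of cosets and the two cases are $m\mid d$ and $m\nmid d$.
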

We show how the above result applies in our context of well orderings in $G=\mathbb{Z}/n\mathbb{Z}$ and $N=\mathbb{Z}/m\mathbb{Z}$ where $m|n$. In \cite{Nathanson77}, Nathanson proved that if $b=p_1p_2\ldots p_t$ for $p_i<k$ not necessarily distinct and if $m$ is $k$-permissible then $mb$ is $k$-permissible. As it is clear that if $p_i<k$ then $p_i$ is $k$-permissible, the next result generalises this. 

\begin{lemma}\label{10} If $n$ and $m$ are $k$-permissible, then so is $mn$. 
\end{lemma}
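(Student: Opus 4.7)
The plan is to apply the Károlyi lemma just quoted, taking $G = \mathbb{Z}/mn\mathbb{Z}$ and the normal subgroup $N = m\mathbb{Z}/mn\mathbb{Z}$. Then $N \cong \mathbb{Z}/n\mathbb{Z}$ has an ordering avoiding $k$-APs since $n$ is $k$-permissible, and $G/N \cong \mathbb{Z}/m\mathbb{Z}$ has an ordering avoiding $k$-APs since $m$ is $k$-permissible. The lemma then produces an ordering of $\mathbb{Z}/mn\mathbb{Z}$ avoiding $k$-APs, which is precisely a permutation of $[1,mn]$ avoiding $k$-APs mod $mn$, so $mn$ is $k$-permissible.

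To match the paper's emphasis on explicit constructions, I would also exhibit this ordering concretely. Let $\sigma = s_1,\ldots,s_m$ be a permutation of $[0,m-1]$ avoiding $k$-APs mod $m$, and let $\tau = t_1,\ldots,t_n$ be a permutation of $[0,n-1]$ avoiding $k$-APs mod $n$. Each $a \in [0,mn-1]$ factors uniquely as $a = r + mq$ with $r \in [0,m-1]$ and $q \in [0,n-1]$. Define $\pi$ by listing, for $j=1,\ldots,m$ in turn, the $n$-element block $s_j + m t_1,\, s_j + m t_2,\,\ldots,\, s_j + m t_n$. Thus the cosets of $N$ appear as contiguous blocks ordered by $\sigma$, and within each block the within-coset offsets are ordered by $\tau$.

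To verify that $\pi$ avoids $k$-APs mod $mn$, I would suppose for contradiction that $a_1,\ldots,a_k$ is such an AP with common difference $d \not\equiv 0 \pmod{mn}$, and write $a_j = r_j + m q_j$. In the case $d \not\equiv 0 \pmod m$, every pair of consecutive $r_j, r_{j+1}$ is distinct, so consecutive terms lie in different blocks and $\sigma^{-1}(r_1) < \sigma^{-1}(r_2) < \cdots < \sigma^{-1}(r_k)$; then the $r_j$'s are all distinct and form a $k$-AP mod $m$ that appears as a subsequence of $\sigma$, contradicting its choice. In the case $d \equiv 0 \pmod m$, all $a_j$ lie in one block, and the $q_j$'s form a $k$-AP mod $n$ with common difference $d/m$ appearing as a subsequence of $\tau$, again contradicting its choice.

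The main subtlety I expect is in the first case: one might worry that the $r_j$'s cycle modulo $m$ whenever $d \pmod m$ has order smaller than $k$ in $\mathbb{Z}/m\mathbb{Z}$, since $\sigma$ only forbids $k$-APs among distinct residues. This is defused by the observation that it suffices to control consecutive pairs: because $r_j \not\equiv r_{j+1} \pmod m$ for every $j$, the block indices $\sigma^{-1}(r_j)$ must strictly increase along the subsequence, which both rules out any cycling and automatically forces all $r_j$ to be distinct. This is really the mechanism behind the Károlyi lemma in our commutative setting.
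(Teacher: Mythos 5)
Your proposal is correct and is essentially the paper's own argument: the explicit ordering you build (cosets mod $m$ as contiguous blocks ordered by the $k$-AP-free permutation mod $m$, with within-block quotients ordered by the $k$-AP-free permutation mod $n$) is exactly the paper's permutation $S$, and your two cases $d\equiv 0\pmod m$ versus $d\not\equiv 0\pmod m$, including the strictly-increasing-block-index observation that forces the residues to be distinct, mirror the paper's Case 1 and Case 2. The paper likewise frames this as the explicit instance of the quoted K\'arolyi lemma, so your opening reduction adds nothing different in substance.
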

\begin{proof} Let $a_1,\ldots,a_n$ be a permutation of $[1,n]$ that avoids $k$-APs mod $n$ and let $b_1,\ldots,b_m$ be a permutation of $[1,m]$ that avoids $k$-APs mod $m$. 

Then consider
$$S=m(a_1-1)+b_1, m(a_2-1)+b_1,\ldots, m(a_n-1)+b_1, m(a_1-1)+b_2, m(a_2-1)+b_2,\ldots, m(a_n-1)+b_2,$$ $$m(a_1-1)+b_3,\ldots, m(a_1-1)+b_m, m(a_2-1)+b_m,\ldots, m(a_n-1)+b_m.$$ We will show that $S$ is a permutation of $[1,mn]$ that avoids $k$-APs mod $mn$ and therefore show $mn$ to be $k$-permissible. First note that this is a permutation of $[1,mn]$ as $m(a_i-1)$ runs through all multiples of $m$ from $0\times m$ to $(n-1)\times m$ and $b_i$ runs through all integers from $1$ to $m$. Assume there exists a $k$-AP mod $mn$ in $S$, with terms $c_1,c_2,\ldots,c_k$ and with common difference $d$. 

\medskip
\noindent
$\textbf{Case 1}$: Assume $d$ is divisible by $m$ so there exists an integer $0<t<n$ such that $d=mt$. Then all terms of the progression are equivalent to each other mod $m$ so $$c_1\equiv c_2 \equiv \ldots \equiv c_k \equiv b_s \pmod{m}\text{ for some }s\in\{1,\ldots,m\}.$$ Then the $k$-AP is contained in $$m(a_1-1)+b_s, m(a_2-1)+b_s,\ldots, m(a_n-1)+b_s.$$ So $c_i=m(a_{r_i}-1)+b_s$ and $r_i$ is a strictly increasing sequence for $1\leq i\leq k$ due to how the terms are ordered in $S$. Then $$m(a_{r_i}-1)+b_s+d\equiv m(a_{r_{i+1}}-1)+b_s \pmod{mn}.$$ So $m(a_{r_i}-1)+d\equiv m(a_{r_{i+1}}-1) \pmod{mn}$ and therefore $a_{r_i}-1+t\equiv a_{r_{i+1}}-1 \pmod n$. This means $a_{r_i}+t\equiv a_{r_{i+1}} \pmod n$.

 Note that the terms $a_{r_i} \pmod n$ are all distinct. This is because if $a_{r_i}\equiv a_{r_j}\pmod n$ then as $a_{r_i},a_{r_j}\in[1,n]$, we have $a_{r_i}=a_{r_j}$ and so $c_i=c_j$. But the terms of the $k$-AP are all distinct, therefore $i=j$. It follows that the sequence $a_{r_1}, a_{r_2},\ldots,a_{r_k}$ forms a $k$-AP mod $n$ with common difference $t$ and is contained in $a_1, a_2,\ldots, a_n$ which is a contradiction. \hfill $\diamond$

\medskip
\noindent
$\textbf{Case 2}$: Assume $m$ does not divide $d$. Then every pair of consecutive terms in the progression, $c_i$ and $c_{i+1}$, have different residues mod $m$ and as terms in $S$ are arranged in blocks of their residues mod $m$, every term in the $k$-AP is distinct mod $m$. Note that the residues mod $m$ of the terms in $S$ are in the order $b_1,b_2,\ldots,b_m$. So $c_i=m(a_{f_{i}}-1)+b_{g_{i}}$ where $g_i$ is a strictly increasing sequence for $1\leq i\leq k$. It follows that $$m(a_{f_{i}}-1)+b_{g_{i}}+d\equiv m(a_{f_{i+1}}-1)+b_{g_{i+1}}\pmod{mn}.$$ Therefore $$m(a_{f_{i}}-1)+b_{g_{i}}+d\equiv m(a_{f_{i+1}}-1)+b_{g_{i+1}} \pmod m \text{ and so } b_{g_i}+d\equiv b_{g_{i+1}}\pmod m.$$ If $b_{g_i}\equiv b_{g_j}\pmod m$ then as $b_{g_i}, b_{g_j}\in[1,m]$, we have $b_{g_i}=b_{g_j}$. This means $c_i\equiv c_j\pmod m$ and so $i=j$. Therefore, the terms $b_{g_i}\pmod m$ are all distinct. It follows that the sequence $b_{g_1},b_{g_2},\ldots,b_{g_k}$ forms a $k$-AP mod $m$ with common difference $d$ and is contained in $b_1,\ldots,b_m$. This is a contradiction and so this case cannot occur.  \hfill $\diamond$

\medskip
\noindent
Therefore we can conclude that there is no $k$-AP mod $mn$ in $S$. 
\end{proof}
This implies that if all the primes dividing $n$ are $k$ permissible, so is $n$. Conversely, we know from Nathanson \cite{Nathanson77}, that if $m$ is not $k$ permissible and $m$ divides $n$ then $n$ is not $k$-permissible. One way to see this is that $m$ not being $k$-permissible means every permutation of the integers in $[1,m]$ contains a $k$-AP mod $m$ and when this progression is multiplied by the integer $\frac{n}{m}$, it becomes a $k$-AP mod $n$. For any permutation of $[1,n]$, the subsequence comprised of terms that are multiples of $\frac{n}{m}$ is a permutation of the integers in $[1,m]$ multiplied by $\frac{n}{m}$ so it contains a $k$-AP mod $n$.

Together this implies that $n$ is $k$-permissible if and only if all prime factors of $n$ are $k$-permissible. Therefore, we can restrict our attention to prime $n$. With this it is quite easy to rederive the result that the $n$ that are $3$-permissible are the powers of $2$ (see \cite{Goh21,Karolyi17,Nathanson77} for alternate proofs). Any permutation of $[1,2]$ clearly avoids $3$-APs mod $2$. For any odd prime $p$ and permutation $a_1,\ldots,a_p$ of $[1,p]$, the terms $a_1,a_2,2a_2-a_1\pmod p$ form a $3$-AP mod $p$. This is because $2a_2-a_1\pmod p$ must be distinct from $a_1$ and $a_2$ as otherwise $a_1\equiv a_2\pmod p$ which is a contradiction as $a_1,a_2,$ are distinct integers of $[1,p]$. Using this with Lemma $10$ implies $n$ is $3$-permissible if and only if $n$ is a power of $2$.

We can also use Lemma $10$ to determine an infinite number of $n$ that are $4$-permissible. The following permutations show that all primes less than or equal to $23$ are $4$-permissible. 
$$\begin{array}{c}

                \hspace{4.5em} 2,1 \hspace{4.5em} \vspace{0.5em} \\

                \hspace{3.5em} 3,1,2 \hspace{3.5em} \vspace{0.5em}\\

                \hspace{3em} 5,1,3,4,2 \hspace{3em} \vspace{0.5em}\\

                \hspace{2em} 7,4,2,6,3,5,1 \hspace{2em} \vspace{0.5em}\\

                \hspace{1em} 11,2,7,9,10,6,1,4,8,5,3 \hspace{1em} \vspace{0.5em}\\

                 \hspace{2em}13,10,3,5,4,12,8,1,2,9,6,7,11  \hspace{2em} \vspace{0.5em} \\

                 \hspace{2em}17,3,8,7,12,1,5,13,4,15,14,9,16,6,11,10,2  \hspace{2em} \vspace{0.5em}\\

                 \hspace{2em}19, 3, 1, 9, 6, 5, 2, 15, 7, 13, 14, 10, 8, 17, 4, 16, 18, 12, 11 \hspace{2em} \vspace{0.5em}\\

                 \hspace{2em}23, 1, 22, 15, 3, 11, 2, 13, 18, 17, 20, 7, 8, 10, 5, 9, 6, 21, 4, 19, 14, 12, 16 \hspace{2em} \\
                
\end{array}$$

So all $n$ with no prime factors greater than $23$ are $4$-permissible. 

We now find the structure of all permutations of $[1,2^k]$ which avoid $3$-APs mod $2^k$. This structure had been somewhat described in \cite{Goh21}.

\begin{theorem} [\cite{Goh21}]
Let $n\geq 1$ be an integer. The number of permutations of
$[1,n]$ that do not contain any $3$-APs mod $n$ is $2^{n-1}$ if $n = 2^k$ for some $k \geq 1$, and is $0$ otherwise. A permutation, $Q$, of $[1,2^k]$ that contains no $3$-APs mod $2^k$ consists of $2^{k-1}$ elements of the same parity, followed by the $2^{k-1}$ elements of the opposite parity.\end{theorem}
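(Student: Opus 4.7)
\medskip
\noindent
\textbf{Proof Plan.}

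The ``$0$ otherwise'' case is immediate from the discussion after Lemma~\ref{10}: for any odd prime $p$, the triple $a_1, a_2, 2a_2 - a_1 \pmod{p}$ exhibits a $3$-AP mod $p$ in every permutation of $[1,p]$, and the downward implication noted in that same paragraph (if $m \mid n$ and $m$ is not $3$-permissible, then neither is $n$) propagates non-$3$-permissibility from $p$ to every multiple of $p$. Hence only powers of $2$ can be $3$-permissible. For $n = 2^k$ I would prove, by induction on $m$ for $0 \leq m \leq k-1$, the stronger structural statement: for any permutation $Q$ of $[1, 2^k]$ avoiding $3$-APs mod $2^k$, the set $\{q_1, \dots, q_{2^m}\}$ is a coset of the subgroup $H_m := 2^{k-m}\mathbb{Z}/2^k\mathbb{Z}$ of $\mathbb{Z}/2^k\mathbb{Z}$. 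The theorem's structural claim is precisely the case $m = k-1$, since the two cosets of $H_{k-1} = 2\mathbb{Z}/2^k\mathbb{Z}$ in $\mathbb{Z}/2^k\mathbb{Z}$ are exactly the evens and the odds in $[1, 2^k]$.

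The base $m = 0$ is trivial, and the case $m = 1$ already contains the essential idea: extending $(q_1, q_2)$ to a $3$-AP mod $2^k$ demands a third term $2q_2 - q_1 \pmod{2^k}$, which would be forbidden if it appeared at a position $> 2$; so $2q_2 - q_1$ must coincide with $q_1$ or $q_2$, and since $q_1 \neq q_2$ in $\mathbb{Z}/2^k\mathbb{Z}$ the only option is $2q_2 - q_1 \equiv q_1 \pmod{2^k}$, giving $q_2 \equiv q_1 \pmod{2^{k-1}}$ and hence $\{q_1, q_2\} = q_1 + H_1$.

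The main obstacle is the general inductive step from $m$ to $m+1$. Assuming $C_m := \{q_1,\dots,q_{2^m}\} = c_0 + H_m$ for some $c_0$, let $C_m^* := c_0 + 2^{k-m-1} + H_m$ be the unique other $H_m$-coset whose union with $C_m$ is a coset of $H_{m+1}$. I would then sub-induct on $j = 1, \dots, 2^m$: assuming $q_{2^m+1}, \dots, q_{2^m+j-1} \in C_m^*$, suppose for contradiction that $c_\star := q_{2^m+j}$ lies in an $H_m$-coset $C \neq C_m, C_m^*$. A straightforward coset computation shows $D := 2c_\star - C_m = (2c_\star - c_0) + H_m$ is an $H_m$-coset distinct from both $C_m$ and $C$ (the equalities $D = C_m$ and $D = C$ force $c_\star \in C_m \cup C_m^*$ and $c_\star \in C_m$ respectively, both contradictions). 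In the subcase $D \neq C_m^*$, every $c \in C_m$ yields a value $2c_\star - c \in D$ that is distinct from all of $q_1,\dots,q_{2^m+j}$, so must appear later in $Q$, forming a forbidden $3$-AP $(c, c_\star, 2c_\star - c)$. In the remaining subcase $D = C_m^*$, the map $c \mapsto 2c_\star - c$ is a bijection $C_m \to C_m^*$; since only $j-1 < 2^m$ elements of $C_m^*$ have been placed so far, there exists $c \in C_m$ with $2c_\star - c$ still unplaced, again giving a forbidden $3$-AP. Either way we derive a contradiction, so $q_{2^m+j} \in C_m^*$, completing the sub-induction.

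Once the structural claim is established, the count is immediate. By the $m = k-1$ case, the first $2^{k-1}$ terms of any good $Q$ are all of one parity and the last $2^{k-1}$ of the other. The even sub-sequence, divided by $2$, is a good permutation of $[1, 2^{k-1}]$ mod $2^{k-1}$, because $a + c \equiv 2b \pmod{2^k}$ for even $a, b, c$ translates to $a/2 + c/2 \equiv 2 \cdot b/2 \pmod{2^{k-1}}$; an analogous identity via $x \mapsto (x+1)/2$ handles the odd half. Writing $G_k$ for the number of good permutations of $[1, 2^k]$, this gives the recursion $G_k = 2 \cdot G_{k-1}^2$, which together with $G_1 = 2$ yields $G_k = 2^{2^k - 1} = 2^{n-1}$, as claimed.
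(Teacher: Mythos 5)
Your argument is essentially sound, and it is worth noting how it sits relative to the paper: the statement you proved is quoted from Goh and is not reproved directly in the paper; instead the paper establishes the finer normal form of Theorem~\ref{12} (first $2^r$ terms are exactly the residues $\equiv a \pmod{2^{k-r}}$, arranged in the blocks $[a+m2^{k-t}]'$) by an induction on the position blocks, and then counts via $\phi(k)=2^k\phi(k-1)\cdots\phi(0)$. Your coset filtration (``the first $2^m$ entries form a coset of $H_m=2^{k-m}\mathbb{Z}/2^k\mathbb{Z}$'') is exactly the same structural fact in group-theoretic language, and your sub-induction with the two subcases $D\neq C_m^*$ and $D=C_m^*$ plays the role of the paper's pigeonhole step; I checked the coset computations ($D=C_m$ forces $c_\star\in C_m\cup C_m^*$, $D=C$ forces $c_\star\in C_m$, and in each subcase the third term $2c_\star-c$ lies in a coset disjoint from all placed elements or has an unplaced preimage) and they are correct. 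Your counting step, by contrast, follows Goh's original route (the $2S,2T-1$ / $2T-1,2S$ recursion, giving $G_k=2G_{k-1}^2$) rather than the paper's direct count from the normal form; your handling of the ``$0$ otherwise'' case via the odd-prime $3$-AP $a_1,a_2,2a_2-a_1$ and the divisibility argument after Lemma~\ref{10} matches the paper's own rederivation.

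One step is asserted but not justified: the equality $G_k=2G_{k-1}^2$. What you actually prove is that every good permutation of $[1,2^k]$ decomposes into a parity choice and two good permutations of $[1,2^{k-1}]$, which only gives $G_k\le 2G_{k-1}^2$ and hence only the upper bound $2^{2^k-1}$ on the count. For the equality you also need the converse: any concatenation of a good even half and a good odd half (in either order) avoids $3$-APs mod $2^k$. This is easy but must be said: if $a,b,c$ is a $3$-AP mod $2^k$ then $a+c\equiv 2b$ forces $a$ and $c$ to have the same parity, so either all three terms lie in one half (excluded because that half, rescaled, is good mod $2^{k-1}$, by the same identities you already wrote down) or $b$ lies in the other half, which contradicts $b$ occurring between $a$ and $c$ in the permutation. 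The paper's Theorem~\ref{12} proves the corresponding sufficiency direction explicitly (``all permutations of this form avoid $3$-APs mod $2^k$''), and your proof needs the analogous sentence to close the count.
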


They note in the proof that as the subsequence of the $2^{k-1}$ terms of the same parity in $Q$ must avoid $3$-APs, these subsequences must be $2S$ or $2T-1$ for some $S$ or $T$ that are permutations of $[1,2^{k-1}]$ that avoid $3$-APs mod $2^{k-1}$. From this they prove that all permutations of $[1,2^k]$ that avoid $3$-APs mod $2^k$ are exactly those of the form $2S,2T-1$ or $2T-1,2S$ for some $S$ and $T$ which are permutations of $[1,2^{k-1}]$ that avoid $3$-APs mod $2^{k-1}$. We extend this characterisation in more detail. 

Let $[a+mb]'$ be a permutation of all the integers in $[1,2^k]$ that are equivalent to $a+mb \pmod{2^k}$, where $a, b$ are fixed integers and $m$ can be any odd integer, that avoids $3$-APs mod $2^k$. That is, $[a+mb]'$ contains all integers in $[1,2^k]$ that are equivalent to $a+b \pmod{2b}$. 

\begin{theorem}\label{12} Permutations of $[1,2^k]$ that avoid $3$-APs mod $2^k$ are exactly those of the form $$a,a+2^{k-1}\pmod{2^k},[a+m2^{k-2}]',[a+m2^{k-3}]',\ldots,[a+m2]',[a+m]'$$ for an $a\in[1,2^k].$
There are $2^{{2^k}-1}$ such permutations.\end{theorem}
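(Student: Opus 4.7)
The plan is to argue by induction on $k$, leveraging the previously quoted result of Goh that every $3$-AP mod $2^k$-free permutation of $[1,2^k]$ has the form $2S,2T-1$ or $2T-1,2S$, where $S$ and $T$ are $3$-AP mod $2^{k-1}$-free permutations of $[1,2^{k-1}]$. The base case $k=1$ is immediate: both $1,2$ and $2,1$ vacuously avoid $3$-APs mod $2$, matching the formula $a,(a+1)\bmod 2$ for $a\in\{1,2\}$ and the count $2^{2^1-1}=2$.

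For the inductive step I would take an arbitrary $3$-AP mod $2^k$-free permutation $Q$ of $[1,2^k]$, let $a$ be its first element, and use Goh to write $Q=2S,2T-1$ when $a$ is even (say $a=2a'$) and $Q=2T-1,2S$ when $a$ is odd; the two cases are symmetric so I describe only the even one. Writing $[\,\cdot\,]'_{k-1}$ for the mod $2^{k-1}$ analogue of the bracket notation, the inductive hypothesis gives
$$S \;=\; a',\ (a'+2^{k-2})\bmod 2^{k-1},\ [a'+m\cdot 2^{k-3}]'_{k-1},\ \ldots,\ [a'+m]'_{k-1}.$$
The key algebraic fact is that $x\mapsto 2x\bmod 2^k$ sends the residue class $a'+2^j\pmod{2^{j+1}}$ in $[1,2^{k-1}]$ bijectively onto the residue class $a+2^{j+1}\pmod{2^{j+2}}$ inside the even integers of $[1,2^k]$; applying this term-by-term to $S$ yields
$$2S \;=\; a,\ (a+2^{k-1})\bmod 2^k,\ [a+m\cdot 2^{k-2}]',\ \ldots,\ [a+m\cdot 2]',$$
which is exactly the first half of the claimed structure. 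For the second half, the elementary identity that $2d_1-1,2d_2-1,2d_3-1$ forms a $3$-AP mod $2^k$ iff $d_1,d_2,d_3$ forms a $3$-AP mod $2^{k-1}$ shows that $2T-1$ avoids $3$-APs mod $2^k$ precisely when $T$ avoids them mod $2^{k-1}$, so $2T-1$ is a valid $[a+m]'$ block of the opposite parity, and the two halves concatenate to the desired form.

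For the converse, given any $a\in[1,2^k]$ and any valid choices of the blocks, I would note that $\{a\}$ together with $[a+m\cdot 2^j]'$ for $0\le j\le k-1$ partitions $[1,2^k]$ according to the $2$-adic valuation of $r-a$, so the concatenation really is a permutation; reversing the maps above recovers a pair $S,T$ that by induction are $3$-AP mod $2^{k-1}$-free, and Goh's theorem then implies the concatenation avoids $3$-APs mod $2^k$. For the count, letting $p(k)$ denote the number of such permutations, Goh's two-form decomposition together with the fact that any pairing of $S$ and $T$ is admissible (since in any $3$-AP mod $2^k$ the first and third terms must share a parity, ruling out progressions spanning the two halves) gives $p(k)=2p(k-1)^2$; combined with $p(1)=2$ this iterates to $p(k)=2^{2^k-1}$. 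The main obstacle is purely notational: setting up cleanly the correspondence between the mod $2^k$ and mod $2^{k-1}$ bracket notations and verifying that the maps $x\mapsto 2x$ and $x\mapsto 2x-1$ interact correctly with the residue-class definitions and with the definition of $3$-AP mod $2^k$ versus mod $2^{k-1}$; once these correspondences are in place the induction runs mechanically.
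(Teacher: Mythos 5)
Your proposal is correct, but it proves the theorem by a genuinely different route from the paper. You induct on $k$, using the cited characterisation from Goh (every $3$-AP mod $2^k$-free permutation is $2S,2T-1$ or $2T-1,2S$ with $S,T$ being $3$-AP mod $2^{k-1}$-free permutations of $[1,2^{k-1}]$) as the engine, and you translate the bracket notation through the maps $x\mapsto 2x$ and $x\mapsto 2x-1$; your key algebraic facts check out ($x\equiv a'+2^j\pmod{2^{j+1}}$ doubles to $a+2^{j+1}\pmod{2^{j+2}}$; a $3$-AP mod $2^k$ has first and third terms of equal parity, so no progression straddles the two parity halves; the partition of $[1,2^k]\setminus\{a\}$ by the $2$-adic valuation of $r-a$ shows the concatenation is a permutation), and the recursion $p(k)=2p(k-1)^2$, $p(1)=2$ does give $2^{2^k-1}$. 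The paper instead gives a self-contained argument that never invokes Goh's structure: it fixes a single $3$-AP mod $2^k$-free permutation and inducts on the prefix length, showing (via the observation that for any earlier term $y$ the element $2x-y\pmod{2^k}$ must also occur earlier, together with a pigeonhole argument) that the terms in positions $2^r+1$ through $2^{r+1}$ are exactly the residue class $a+m2^{k-(r+1)}$; it then verifies the converse directly by a two-case valuation argument and counts via $\phi(k)=2^k\phi(k-1)\cdots\phi(0)$. What your approach buys is brevity and transparency: the nested bracket form is exhibited as the literal unfolding of Goh's recursion. What the paper's approach buys is independence: it reproves Goh's parity-split structure and the count $2^{2^k-1}$ from scratch (the paper explicitly advertises obtaining the count "by a different method"), whereas your argument leans on the finer "exactly $2S,2T-1$ or $2T-1,2S$" statement, which in the paper appears only as a quoted remark about Goh's proof rather than as the displayed theorem, and your count is then essentially a restatement of Goh's. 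If you keep your route, either cite that finer statement precisely or note that it follows from the displayed parity-split theorem plus the parity correspondences you already establish (your straddling argument in fact supplies the converse direction), and handle the small-$k$ degenerate cases of the bracket notation explicitly.
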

\begin{proof} Let $P$ be a permutation of $[1,2^k]$ that avoids $3$-APs mod $2^k$. We shall prove this has the claimed structure by induction on $r$ where $r$ is defined below. If $k=0$ or $k=1$, then $P$ trivially satisfies the structure in the theorem. Note that this structure can be rewritten as $$[a+m2^k]',[a+m2^{k-1}]',[a+m2^{k-2}]',[a+m2^{k-3}]',\ldots,[a+m2]',[a+m]'$$ as the first two blocks only contain one term each. Additionally, the block $[a+m2^{k-t}]'$ has $2^{t-1}$ terms in it for $1\leq t\leq k$. Let the first term of $P$ be $a$. Then denote the second term by $b$. It is clear that $a,b,2b-a \pmod{2^k}$ is a $3$-AP mod $2^k$ in $P$, unless $2b-a\equiv a \pmod{2^k}$ or $2b-a\equiv b\pmod{2^k}$. If $2b-a\equiv b\pmod{2^k}$ then $a\equiv b\pmod{2^k}$ but as $a,b\in[1,2^k]$, we see that $a=b$. But $a$ and $b$ are distinct. If $2b-a\equiv a \pmod{2^k}$ then $b\equiv a\pmod{2^{k-1}}$ so $b=a$ or $b=a+2^{k-1}\pmod{2^k}$. But $b$ is distinct from $a$ so the second term is $a+2^{k-1}\pmod{2^k}$. So the first $2^1$ terms are in the order as claimed. 

Now assume that the first $2^r$ terms are in the order as claimed for some $1\leq r\leq k-1$. This corresponds to the first $2^r$ terms being $$[a+m2^k]',[a+m2^{k-1}]',\ldots,[a+2^{k-r}]' \text{ as } 1+\sum_{i=1}^{r}2^{i-1}=2^r.$$ Label the $2^r+i$th term in $P$ by $x_i$. Then the following term is $x_1$. For any term $y$ appearing before $x_1$, if $2x_1-y\pmod{2^k}$ appeared in $P$ after $x_1$, then $y,x_1,2x_1-y\pmod{2^k}$ forms a $3$-AP mod $2^k$. We see that $2x_1-y\pmod{2^k}$ cannot be equal to $x_1$ as this implies that $x_1\equiv y\pmod{2^k}$ and therefore that $x_1=y$ which is a contradiction as $x_1$ and $y$ are distinct terms of the permutation of $[1,2^k]$. So for all terms $y$ appearing before $x_1$, we know $2x_1-y\pmod{2^k}$ must appear in $P$ before $x_1$. All terms appearing in $P$ before $x_1$ are equivalent to $a+2^{k-l}\pmod{2^k}$ for $0\leq l\leq r$ so they are equivalent to $a\pmod{2^{k-r}}$. So for any $y$ before $x_1$, it follows that $$2x_1-y\equiv a\pmod{2^{k-r}} \text{ and } y\equiv a\pmod{2^{k-r}}.$$ Therefore, $2x_1\equiv 2a\pmod{2^{k-r}}$ and so $x_1\equiv a\pmod{2^{k-r-1}}$. So $2^{k-(r+1)}$ divides $x_1-a$. Assume $x_1$ is not equivalent to $a+m2^{k-(r+1)}\pmod{2^k}$ for some odd $m$. Then $x_1\equiv a+m2^{k-l}\pmod{2^k}$ for some odd $m$ and some $r+1<l\leq k$ as all terms with $l\leq r$ occur before $x_1$. However, this implies that $2^{k-l}$ is the highest power of $2$ that divides $x_1-a$ and so $2^{k-(r+1)}$ does not divide $x_1-a$. This contradiction means that $x_1\equiv a+m2^{k-(r+1)}\pmod{2^k}$ for some odd $m$.

Now consider $x_i$ for some fixed $i$, for $2\leq i\leq 2^r$. Then for all terms $y$ before $x_1$ in $P$, we know that $2x_i-y\pmod{2^k}$ cannot appear after $x_i$ in $P$, as $P$ avoids $3$-APs mod $2^k$. We see $2x_i-y\pmod{2^k}$ cannot equal $x_i$ as then $x_i\equiv y\pmod{2^k}$ and so $x_i=y$ which is a contradiction as $y$ appears in $P$ before $x_i$. It follows that $2x_i-y\pmod{2^k}$ appears in $P$ before $x_i$. If there exists $y$ before $x_1$ such that $2x_i-y\pmod{2^k}$ occurs before $x_1$ then by the argument above for $x_1$, it is clear that $$x_i\equiv a+m2^{k-(r+1)}\pmod{2^k}$$ for $m$ odd.

Let the first $2^r$ terms of $P$ be $y_1,y_2,\ldots,y_{2^r}$ and assume there is no $y_j$ such that $2x_i-y_j\pmod{2^k}$ appears in $P$ before $x_1$. So for each of the $y_j$ before $x_1$ in $P$, for $1\leq j\leq 2^r$, there exists $n$, for $1\leq n<i\leq 2^r$, such that $2x_i-y_j\equiv x_n\pmod{2^k}$. As there are $2^r$ values of $y_j$ and less than $2^r$ values of $n$, by the pigeonhole principle there must exist distinct $1\leq j<h\leq 2^r$ and an $n$, for $1\leq n<i\leq 2^r$, such that $$2x_i-y_j\equiv x_n\equiv 2x_i-y_h\pmod{2^k}.$$ However, then $y_j\equiv y_h\pmod{2^k}$ but as $y_j$ and $y_h$ are terms of a permutation of $[1,2^k]$, this implies $y_j=y_h$ and so $j=h$. This is a contradiction.

So $x_i\equiv a+m_i2^{k-(r+1)}\pmod{2^k}$ for $m_i$ odd and $1\leq i\leq 2^r$. As there are $2^r$ terms equivalent to $a+m2^{k-(r+1)}\pmod{2^k}$ for $m$ odd in $[1,2^k]$, it is clear that $x_1, x_2,\ldots, x_{2^r}$ are exactly these terms. As $P$ avoids $3$-APs mod $2^k$, so does $x_1,x_2,\ldots,x_{2^r}$. It follows that $x_1,x_2,\ldots,x_{2^r}$ is a permutation of the form $[a+m2^{k-(r+1)}]'$. So the first $2^{r+1}$ terms of $P$ are ordered as claimed.
Therefore the induction is completed. 
So all the permutations of $[1,2^k]$ that avoid $3$-APs mod $2^k$ are of the form $$a,a+2^{k-1}\pmod{2^k},[a+m2^{k-2}]',[a+m2^{k-3}]',\ldots,[a+m2]',[a+m]'.$$

Now we show that all permutations of this form avoid $3$-APs mod $2^k$. Let $a_1,a_2,a_3,$ be a $3$-AP mod $2^k$ in such a permutation. Then $a_i\equiv a+m_i2^{k-l_i}\pmod{2^k}$ for odd $m_i$ and $i\in\{1,2,3\}$. We see that $0\leq l_1\leq l_2\leq l_3\leq k$ as $a_2$ appears after $a_1$ and $a_3$ appears after $a_2$. 

\medskip
\noindent
$\textbf{Case 1}$: If $l_1=l_2=s$ then $$a_3\equiv 2a_2-a_1\equiv a+2^{k-s}(2m_2-m_1)\equiv a+r2^{k-s}\pmod{2^k}$$ for $r=2n-m$ odd. So $a_1,a_2,a_3,$ are all in $[a+m2^{k-s}]'$ which avoids $3$-APs mod $2^k$. \hfill $\diamond$

\medskip
\noindent
$\textbf{Case 2}$: If $l_2>l_1$ then $l_2\geq l_1+1\geq 1$. So $$a_3\equiv 2a_2-a_1\equiv a+2^{k-l_2+1}(m_2-m_12^{l_2-l_1-1})\pmod{2^k}$$ which implies that $2^{k-l_2+1}$ divides $a_3-a$ as $m_2-m_12^{l_2-l_1-1}$ is an integer and $k-l_2+1\leq k$. But $a_3\equiv a+m_32^{k-l_3}\pmod{2^k}$ for $m_3$ odd and $l_2\leq l_3\leq k$. This is a contradiction as $k-l_3\leq k-l_2$ and so the highest power of $2$ that could divide $a_3-a$ is $2^{k-l_2}$. \hfill $\diamond$

\medskip
\noindent
So permutations of $[1,2^k]$ that avoid $3$-APs mod $2^k$ are exactly those of the form $$a,a+2^{k-1}\pmod{2^k},[a+m2^{k-2}]',[a+m2^{k-3}]',\ldots,[a+m2]',[a+m]'.$$

 Let $1\leq t\leq k$. Then a permutation of $[1,2^{t-1}]$ that avoids $3$-APs mod $2^{t-1}$, has been multiplied by $2^{k-t+1}$, had $a-2^{k-t}$ added to its terms and then been reduced mod $2^k$ is a permutation of the form $[a+m2^{k-t}]'$. Additionally each permutation of the form $[a+m2^{k-t}]'$ that has had $2^{k-t}-a$ added to its terms, been divided by $2^{k-t+1}$ and reduced mod $2^{t-1}$ is a permutation of $[1,2^{t-1}]$ that avoids $3$-APs mod $2^{t-1}$.  
 So the number of possible permutations that are of the form $[a+m2^{k-t}]'$ is the same as the number of permutations of $[1,2^{t-1}]$ that avoid $3$-APs mod $2^{t-1}$. 
 
 Let $\phi(k)$ be the number of permutations of $[1,2^k]$ that avoid $3$-APs mod $2^k$. Then as this is the number of permutations of the form $$a,a+2^{k-1}\pmod{2^k},[a+m2^{k-2}]',[a+m2^{k-3}]',\ldots,[a+m2]',[a+m]',$$ we see that $$\phi(k)=2^k\phi(k-1)\phi(k-2)\ldots\phi(2)\phi(1)\phi(0)$$ as there are $2^k$ choices for $a$ and there are $\phi(t-1)$ choices for $[a+m2^{k-t}]'$ with $1\leq t\leq k$. As $\phi(0)=1$ is clear, it is straightforward to check that $\phi(k)=2^{2^k-1}$. \end{proof}

 \section{Open Questions}

 There are still many unknown questions in this area. Here we lay out some of these, including summarising those we have mentioned.

 \begin{question}[\cite{Davis77,MR0592420,LeSaulnier11}]
 Do there exist permutations of the positive integers or the integers that avoid $4$-APs?
 \end{question}

This is the most obvious open question and was asked in \cite{Davis77} in 1977, the first paper on the topic. Showing $\beta_{\mathbb{Z}^+}(4)=1$ is a step towards resolving this in the positive integer case, but it seems some new techniques will need to be applied to answer it, either in showing that all permutations of the (positive) integers contain $4$-APs or in constructing a permutation of the (positive) integers that avoids them.   

 \begin{question}[\cite{Davis77,MR0592420}]
 Do there exists doubly infinite permutations of the integers that avoid $4$-APs?
\end{question}
It was shown in \cite{Davis77} that there exist doubly infinite permutations of the positive integers that avoid $4$-APs, but it is not known if the same is true of the integers. Note that if there exists a permutation of the integers, $P$, that avoids $4$-APs then there exists a doubly infinite permutation of the integers, $2P-1$ going to the left and $2P$ going to the right, that avoids $4$-APs.  

 \begin{question}[\cite{Davis77,Geneson18,LeSaulnier11}]
 What are the correct values of $\alpha_{\mathbb{Z}}(3),\alpha_{\mathbb{Z}^+}(3), \beta_{\mathbb{Z}}(3), \beta_{\mathbb{Z}^+}(3)$ and $\beta_{\mathbb{Z}}(4)$?
 \end{question}

As it seems likely that the density functions for subsets that avoid $3$-APs are significantly less than $1$, this would require a way to prove a nontrivial upper bound, (less than $1$), on these functions which has not been achieved yet. As mentioned before, it was conjectured in \cite{LeSaulnier11} that $\alpha_{\mathbb{Z}^+}(3)=\frac{1}{2}$ and $\beta_{\mathbb{Z}^+}(3)=\frac{1}{4}$, so progress on this front would be of interest.

\begin{question}[\cite{Davis77,Karolyi17}]
    Are there permutations of $[1,p]$ that avoid $4$-APs mod $p$ for all primes $p$?
\end{question}

This would answer the question for all $n$ as explained above. It is known that all that, for all $n$ and $k\geq 5$, there are permutations of $[1,n]$ that avoid $k$-APs, as proved in \cite{Karolyi17}. A further question would be to investigate the number of permutations that avoid $k$-APs mod $n$ for $k>3$, as suggested in \cite{Goh21}.

\begin{question}[\cite{Goh21}]
How many permutations of $[1,n]$ avoid $k$-APs for $k\geq 3$?
\end{question}

This question has been studied for $k=3$ in \cite{Correll2017ANO,Davis77,LeSaulnier11,sharma2009enumerating}, and lower bounds for larger $k$ have been given in \cite{Geneson18}.  

\begin{question}[\cite{Davis77,MR0592420,Geneson18,LeSaulnier11}]
Is it possible to partition the positive integers into two sets, each of which can be permuted to avoid $3$-APs?
\end{question}

In \cite{Davis77}, the positive integers were partitioned into three sets, each of which could be permuted to avoid $3$-APs. In \cite{MR0592420}, it was mentioned that the corresponding question for integers was open. We note that the integers can be partitioned into $3$ sets each of which can be permuted to avoid $3$-APs. 
Let $K$ be as in Theorem \ref{8}. Define $X_i'$ as a permutation of $[\lceil \frac{8}{5}2^i\rceil, \lfloor \frac{64}{25}2^i\rfloor]$ that avoids $3$-APs and $Y_i'$ as a permutation of $[-\lfloor \frac{64}{25}2^i\rfloor, -\lceil \frac{8}{5}2^i\rceil]$ that avoids $3$-APs. Then let $$K'= 0,1,Y_0',X_1', Y_2', X_3',Y_4',\ldots,X_{2k-1}',Y_{2k}'\ldots$$ Similarly, define $X_i''$ as a permutation of $[\lceil \frac{64}{25}2^i\rceil, 2^{i+2})$ that avoids $3$-APs and $Y_i''$ as a permutation of $(-2^{i+2}, -\lceil \frac{64}{25} 2^i\rceil]$ that avoids $3$-APs. Then let $$K''= Y_0'', X_1'', Y_2'', X_3'',Y_4'',\ldots,X_{2k-1}'',Y_{2k}''\ldots$$  
As $K'$ and $K''$ are very similar to $K$, (the endpoints of their intervals are scaled by $\frac{8}{5}$ and $\frac{64}{25}$ respectively compared to $K$), it can be shown by the same argument as in Theorem \ref{8} that they both avoid $3$-APs. As the set $K\cup K'\cup K''$ is the set of all integers and each avoids $3$-APs, we can partition the integers into $3$ sequences that each avoid $3$-APs. It was noted in \cite{LeSaulnier11} that if $\alpha_{\mathbb{Z^+}}(3)+\beta_{\mathbb{Z^+}}(3)<1$ then the positive integers cannot be partitioned into $2$ sequences that each avoid $3$-APs. Similarly, if $\alpha_{\mathbb{Z}}(3)+\beta_{\mathbb{Z}}(3)<1$ then the integers cannot be partitioned into $2$ sequences that each avoid $3$-APs. 

\bibliographystyle{plain}
\bibliography{Latex}

\begin{thebibliography}{1}

\bibitem{Correll2017ANO}
Bill Correll and Randy~W. Ho.
\newblock A note on 3-free permutations.
\newblock {\em Integers}, 17:A55, 2017.

\bibitem{Davis77}
J.A. Davis, R.C. Entringer, R.L. Graham, and G.J. Simmons.
\newblock On permutations containing no long arithmetic progressions.
\newblock {\em Acta Arithmetica}, 34:81--90, 1977.

\bibitem{MR0592420}
P.~Erd\H{o}s and R.~L. Graham.
\newblock {\em Old and new problems and results in combinatorial number theory}, volume~28 of {\em Monographies de L'Enseignement Math\'ematique}.
\newblock Universit\'e{} de Gen\`eve, L'Enseignement Math\'ematique, Geneva, 1980.

\bibitem{Geneson18}
Jesse Geneson.
\newblock Forbidden arithmetic progressions in permutations of subsets of the integers.
\newblock {\em Discrete Mathematics}, 342:1489--1491, 2019.

\bibitem{Goh21}
Marcel~K. Goh and Rosie~Y. Zhao.
\newblock Arithmetic subsequences in a random ordering of an additive set.
\newblock {\em Integers Electronic Journal of Combinatorial Number Theory}, 21:A89, 2021.

\bibitem{Karolyi17}
Gyula K\'arolyi and Peter Komj\'ath.
\newblock Well ordering groups with no monotone arithmetic progressions.
\newblock {\em Order}, 34:299--306, 2017.

\bibitem{LeSaulnier11}
Timothy~D. LeSaulnier and Sujith Vijay.
\newblock On permutations avoiding arithmetic progressions.
\newblock {\em Discrete Mathematics}, 311(2-3):205--207, 2011.

\bibitem{Nathanson77}
Melvyn~B Nathanson.
\newblock Permutations, periodicity, and chaos.
\newblock {\em Journal of Combinatorial Theory, Series A}, 22(1):61--68, 1977.

\bibitem{sharma2009enumerating}
Arun Sharma.
\newblock Enumerating permutations that avoid three term arithmetic progressions.
\newblock {\em The Electronic Journal of Combinatorics}, 16(1):R63, 2009.

\end{thebibliography}

\end{document}